\newtheorem{theorem}{Theorem}[section]
\newtheorem{corollary}[theorem]{Corollary}
\newtheorem{lemma}[theorem]{Lemma}
\newtheorem{example}{Example}[section]
\title{Additive Biderivations of Incidence Algebras}
\date{\today}
\subjclass[2020]{Primiary 16W25, 15A78, Secondary 05B20, 16S60}
\keywords{additive biderivation, incidence algebra, local finite poset, maximal chain}
\author{Zhipeng Guan$^{1}$, Chi Zhang*$^{1}$}\thanks{*Corresponding author}
\address{$^{2}$College of Sciences, Northeastern University, Shenyang, Liaoning 110004, China}
\email{2300118@stu.neu.edu.cn, zhangchi@mail.neu.edu.cn}
\begin{document}
	
	\maketitle

	\begin{abstract}
		Let $\mathcal{R}$ be a commutative ring with unity, and let $P$ be a locally finite poset. The aim of the paper is to provide an explicit description of the additive biderivations of the incidence algebra $I(P, \mathcal{R})$. We demonstrate that every additive biderivation is the sum of several inner biderivations and extremal biderivations. Furthermore, if the number of elements in any maximal chain in $P$ is infinite, every additive biderivation of $I(P,\mathcal{R})$ is the sum of several inner biderivations.
	\end{abstract}
	
\tableofcontents

	\section{Introduction}
	
	The concept of incidence algebras of partially ordered sets was first introduced by Rota in the 1960s \cite{cite1} as a tool for solving combinatorial problems. Incidence algebras are fascinating objects that have been the subject of extensive research since their inception. For instance, Pierre and John \cite{cite3} describe the relationships between the algebraic properties of incidence algebras and the combinatorial features of the partially ordered sets. In \cite{cite2}, Spiegel \emph{et al.} provide a detailed analysis of the maximal and prime ideals, derivations and isomorphisms, radicals, and additional ring-theoretic properties of incidence algebras. Further developments on the structure of incidence algebras can be found in \cite{cite4,cite5,cite6,cite7}.
	
	The study of derivations in the context of algebras is a valuable and significant endeavor. Yang provides a detailed account of the structure of nonlinear derivations on incidence algebras in \cite{cite12}, specifically decomposing the nonlinear derivations into three more specific forms. Regarding the decomposition of derivations, two significant findings pertaining to their structure have been established. Notably, Baclawski demonstrated that every derivation of the incidence algebra $I(P, \mathcal{R})$, when $\mathcal{R}$ is a field and $P$ is a finite locally poset, can be expressed as the sum of an inner derivation and an additive induced derivation \cite{cite8}. This result was extended by Spiegel and O'Donnell \cite{cite2} to cases where $\mathcal{R}$ is a commutative ring. Additional insights into the structure of other special derivations on incidence algebra can be found in \cite{cite11,cite13,cite14,cite15}.
	
	Building upon the aforementioned studies of derivations on incidence algebras, it is natural to investigate biderivations in this context. Kaygorodov \emph{et al.} delineate the structure of antisymmetric biderivations of finite incidence algebras, which is equivalent to incidence algebras when $P$ is finite, in \cite{Poisson}. In \cite{cite9}, Benkovi\v{c} proves that every biderivation of a triangular algebra is the sum of an inner biderivation and an external biderivation. Later, Ghosseiri demonstrates that every biderivation of upper triangular matrix rings is the sum of an inner biderivation, an external biderivation, and a distinct category of biderivations \cite{cite10}. Ghosseiri also presents particular instances where every biderivation is inner.
	
	In light of the preceding discussion on the decomposition of biderivations in the context of incidence algebras, our objective is to decompose the biderivations of incidence algebras into more specific forms.
	Specifically, let $\mathcal{R}$ be a commutative ring with unity, and let $P$ be a locally finite poset such that any maximal chain in $P$ contains at least three elements. 
	We decompose $P$ using the connected relation defined on $P$ and the chain to obtain $P = \bigcup_{i \in \mathcal{I}}\bigcup_{j \in \mathcal{J}_i}  P^i_j$.  
	For any biderivation $b$ of the incidence algebra $I(P, \mathcal{R})$ and for all $\alpha, \beta \in I(P, \mathcal{R})$, the biderivation $b$ satisfies
	\begin{equation}
		b(\alpha, \beta) = \sum_{i \in \mathcal{I}} \left(\sum_{j \in \mathcal{J}_i} \lambda^i_j [\alpha^i_j, \beta^i_j] + [\hat{\alpha}^i,[\hat{\beta}^i, T^i]]\right), \nonumber
	\end{equation} 
	where $\lambda^i_j \in \mathcal{R}$, $\alpha^i_j, \beta^i_j \in I(P^i_j, \mathcal{R})$, and $\hat{\alpha}^i,\hat{\beta}^i, T^i \in I(P^i,\mathcal{R})$.
	In other words, the additive biderivation $b$ is the sum of several inner biderivations and extremal biderivations exactly.

The following sections provide a comprehensive overview of the contents of this article. In Section 2, we introduce preliminary concepts related to additive biderivations of incidence algebras and present decompositions of connected and disconnected posets $P$. Section 3 presents lemmas concerning derivations of incidence algebras that will be employed in subsequent proofs. Section 4 analyzes the structure of additive biderivations of incidence algebras.
	
	\section{Preliminaries}

	\subsection{Incidence Algebra}
	
	Throughout this paper, $\mathcal{R}$ denotes a commutative ring with unity. Recall that a relation $\leq$ is said to be a \textit{partial order} on a set $S$ if it satisfies the following conditions:
	\begin{enumerate}
		\item Reflexivity: $a \leq a$ for all $a \in S$;
		\item Antisymmetry: if $a \leq b$ and $b \leq a$, then $a = b$;
		\item Transitivity: if $a \leq b$ and $b \leq c$, then $a \leq c$.
	\end{enumerate}
	A \textit{partially ordered set} (or \textit{poset}) is a set equipped with a partial order. A poset $(S, \leq)$ is \textit{locally finite} if, for any $x \leq y$ in $S$, there are finitely many elements $u \in S$ satisfying $x \leq u \leq y$. Let $(P, \leq)$ be a locally finite poset, and let $x < y$ denote that $x \leq y$ and $x \neq y$.
	
	The \textit{incidence algebra} $I(P, \mathcal{R})$ is defined as the set of functions:
	\begin{equation}
		I(P, \mathcal{R}) = \{ f : P \times P \rightarrow \mathcal{R} \mid f(x, y) = 0 \text{ if } x \nleq y \},
	\end{equation}
	with multiplication given by:
	\begin{equation}
		(fg)(x, y) = \sum_{x \leq z \leq y} f(x, z) g(z, y), \quad \text{for all } x, y \in P.
	\end{equation}
	
	For each pair $x \leq y$ in $P$, let $e_{xy}$ denote the element in $I(P, \mathcal{R})$ defined by:
	\begin{equation}
		e_{xy}(u, v) = 
		\begin{cases}
			1, & \text{if } (x, y) = (u, v); \\
			0, & \text{otherwise}.
		\end{cases}
	\end{equation}
	For brevity, we will use the notation $\alpha_{xy}$ to denote $\alpha(x, y)$, where $\alpha \in I(P, \mathcal{R})$ and $x, y \in P$. Consequently, any element $\alpha \in I(P, \mathcal{R})$ can be expressed as $\alpha = \sum_{x \leq y} \alpha_{xy} e_{xy}$. It is evident that the multiplication in $I(P, \mathcal{R})$ satisfies the following properties:
	\begin{enumerate}
		\item If $y = u$, then $e_{xy} e_{uv} = e_{xv}$;
		\item If $y \neq u$, then $e_{xy} e_{uv} = 0$.
	\end{enumerate}
	This relation allows us to derive the following formula, which will be used extensively in this article. For any $f \in I(P, \mathcal{R})$ and $x \leq y$, $u \leq v$ in $P$, we have
	\begin{equation}\label{eq:e_{xx}fe_{yy}=f_{xy}e_{xy}}
		e_{xy} f e_{uv} = f_{yv} e_{xv}.
	\end{equation}

	\subsection{Additive Biderivations}
	A function $d$ is a \textit{derivation} of $I(P, \mathcal{R})$ if, for any $\alpha, \beta \in I(P, \mathcal{R})$, it satisfies
	\begin{equation*}
		d(\alpha \beta) = \alpha d(\beta) + d(\alpha) \beta.
	\end{equation*}
	A function $b$ is said to be a \textit{biderivation} of $I(P, \mathcal{R})$ if it is a derivation when fixing any one of its arguments. This means that for every $\alpha, \beta, \gamma \in I(P, \mathcal{R})$, $b$ satisfies
	\begin{align*}
		&b(\alpha \beta, \gamma) = \alpha b(\beta, \gamma) + b(\alpha, \gamma) \beta, \\
		&b(\alpha, \beta \gamma) = \beta b(\alpha, \gamma) + b(\alpha, \beta) \gamma.
	\end{align*}
	Moreover, $b$ is an \textit{additive biderivation} if $b$ also satisfies
	\begin{align*}
		&b(\alpha + \beta, \gamma) = b(\alpha, \gamma) + b(\beta, \gamma), \\
		&b(\alpha, \beta + \gamma) = b(\alpha, \beta) + b(\alpha, \gamma).
	\end{align*}

	The so-called \textit{inner biderivation} is defined by:
	\begin{equation}
		b(\alpha, \beta) = \lambda [\alpha, \beta],
	\end{equation}
	where $\lambda \in \mathcal{R}$ is a fixed element and $[\alpha, \beta]$ denotes the Lie bracket of $\alpha$ and $\beta$.
	Beside, the biderivation $b$ is called \textit{extremal biderivation}, if there is $\gamma$ in $I(P,\mathcal{R})$, and 
	\begin{equation}
		b(\alpha, \beta) = [\alpha, [\beta, \gamma]].
	\end{equation}

	\subsection{Decomposition of Posets}
	
	Our research reveals a close relationship between the structure of additive biderivations in $I(P, \mathcal{R})$ and that of the poset $P$. Accordingly, we proceed to decompose $P$ in this section, which will subsequently be employed in constructing the structure of the additive biderivations. In detail, we will consider and decompose the cases where P is connected or not.

	First, let us introduce some notation. Consider a relation $\sim$ on $P$ where $x \sim y$ indicates that $x$ and $y$ are comparable, i.e., either $x \leq y$ or $y \leq x$. The relation $\nsim$ indicates that $x$ and $y$ are not comparable. A \textit{totally ordered set} is a poset in which every pair of elements is comparable. A \textit{chain} in a poset $P$ is a subset that is totally ordered with respect to $\leq$.
	
	\begin{example}\label{S}
		Consider the poset $S = \{a, b, c, d, e, f, g, h, i\}$ represented by the following Hasse diagram, where $x \rightarrow y$ denotes $x \leq y$ for all $x, y \in S$. In this case, the pair of elements $b$ and $e$ is not comparable, and the subset $\{g, h, i\}$ of $S$ constitutes a chain.
		\begin{center}
			\begin{tikzpicture}
				\node(0) at (0,0) {$a$};
				\node(a) at (-0.75,1.5) {$b$};
				\node(c) at (0.75,1.5) {$c$};
				\node(ab) at (-1.5,3) {$d$};
				\node(cd) at (0,3) {$e$};
				\node(ce) at (1.5,3) {$f$};
				\node(1) at (2.25,0) {$g$};
				\node(2) at (2.25,1.5) {$h$};
				\node(3) at (2.25,3) {$i$};
				\node(remark) at (1.125,-0.5) {$S$};
				
				\draw [-latex] (0)--(a);
				\draw [-latex] (a)--(ab);
				\draw [-latex] (0)--(c);
				\draw [-latex] (c)--(cd);
				\draw [-latex] (c)--(ce);
				\draw [-latex] (1)--(2);
				\draw [-latex] (2)--(3);
			\end{tikzpicture}
		\end{center} 
	\end{example}

	\noindent \textbf{1. The first decomposition, when P is not connected.}

	In this context, two elements $x, y \in P$ are defined as being \textit{connected} if there exists a sequence $u_0, u_1, \dots, u_n \in P$ such that $x \sim u_0$, $u_0 \sim u_1$, $\dots$, $u_{n-1} \sim u_n$, and $u_n \sim y$. A poset $P$ is \textit{connected} if any pair of elements in it are connected.
	
	It is evident that the relation of being connected is an equivalence relation on $P$. Thus, we can decompose $P$ into the union of its connected components:
	\begin{equation}
		P = \bigcup_{i \in \mathcal{I}} P^i, \label{1_decomposition}
	\end{equation}
	where $\mathcal{I}$ is an index set and each $P^i$ is a connected poset.
	
	\begin{example}
		Consider the poset $S$ defined in Example \ref{S}. It can be observed that $S$ can be expressed as the union of two disjoint sets: $S^1 = \{a, b, c, d, e, f\}$ and $S^2 = \{g, h, i\}$.
		\begin{center}
			\begin{tikzpicture}
				\node(0) at (0,0) {$a$};
				\node(a) at (-0.75,1.5) {$b$};
				\node(c) at (0.75,1.5) {$c$};
				\node(ab) at (-1.5,3) {$d$};
				\node(cd) at (0,3) {$e$};
				\node(ce) at (1.5,3) {$f$};
				\node(1) at (3,0) {$g$};
				\node(2) at (3,1.5) {$h$};
				\node(3) at (3,3) {$i$};
				\node(remark) at (0,-0.5) {$S^1$};
				\node(remark1) at (3,-0.5) {$S^2$};
				
				\draw [-latex] (0)--(a);
				\draw [-latex] (a)--(ab);
				\draw [-latex] (0)--(c);
				\draw [-latex] (c)--(cd);
				\draw [-latex] (c)--(ce);
				\draw [-latex] (1)--(2);
				\draw [-latex] (2)--(3);
			\end{tikzpicture}
		\end{center} 
	\end{example}
	
	It is evident that $P^i \cap P^j = \varnothing$ for $i \neq j$. Therefore, for any $\alpha \in I(P, \mathcal{R})$, we have
	\begin{equation}
		\alpha = \sum_{i \in \mathcal{I}} \alpha^i, \quad \alpha^i = \sum_{x \leq y \in P^i} \alpha_{xy} e_{xy} \in I(P^i, \mathcal{R}). \label{1_alpha_decomposition}   
	\end{equation}

	\noindent \textbf{2. The second decomposition, when P is connected.}
	
	Next, we present a second decomposition of $P$ when $P$ is connected. We begin by defining a key term. A chain in $P$ is called \textit{maximal} if adding any element from $P$ to it would result in it no longer being a chain.
	In this context, the notation $\{x, y, \dots\}^+$ will be used to denote a maximal chain in $P$ that contains $\{x, y, \dots\}$.
	
	We proceed to define a set of subsets of $P$:
	\begin{equation}
		L = \{ l \subset P \mid l \text{ is a maximal chain} \}.
	\end{equation}
	For any pair of elements $l', l'' \in L$, we define the relation $l' \approx l''$ if there exist elements $x < y$ in $P$ such that $x, y \in l' \cap l''$. We say $l'$ and $l''$ are \textit{connected} if there exist chains $l_0, l_1, \dots, l_n \in L$ such that $l' \approx l_0$, $l_0 \approx l_1$, $\dots$, $l_{n-1} \approx l_n$, $l_n \approx l''$.
	
	With respect to the connectedness relation on $L$, we can decompose $L$ into the union of its equivalence classes:
	\begin{equation}
		L = \bigcup_{j \in \mathcal{J}} L_j, \label{2_L_decomposition} 
	\end{equation}
	where $\mathcal{J}$ is an index set corresponding to the decomposition of $L$.
	
	For each $j \in \mathcal{J}$, define a subset of $P$:
	\begin{equation}
		P_j = \{ x \in P \mid \text{there exists } l \in L_j \text{ such that } x \in l \}. \label{2_decomposition}
	\end{equation}
	It is evident that $P = \bigcup_{j \in \mathcal{J}} P_j$. However, it is not necessarily the case that $P_i \cap P_j = \varnothing$ for all $i \neq j$ in $\mathcal{J}$.
	
	\begin{example}
		To illustrate this decomposition, consider $S^1 = \{ a, b, c, d, e, f \}$ from the previous example. The set $S^1$ can be expressed as the union of two sets: $S^1_1 = \{ a, b, d \}$ and $S^1_2 = \{ a, c, e, f \}$.
		\begin{center}
			\begin{tikzpicture}
				\node(0) at (-1.5,0) {$a$};
				\node(0') at (1.5,0) {$a$};
				\node(a) at (-1.5,1.5) {$b$};
				\node(c) at (1.5,1.5) {$c$};
				\node(ab) at (-1.5,3) {$d$};
				\node(cd) at (1,3) {$e$};
				\node(ce) at (2,3) {$f$};
				\node(remark) at (-1.5,-0.5) {$S^1_1$};
				\node(remark2) at (1.5,-0.5) {$S^1_2$};
				
				\draw [-latex] (0)--(a);
				\draw [-latex] (a)--(ab);
				\draw [-latex] (0')--(c);
				\draw [-latex] (c)--(cd);
				\draw [-latex] (c)--(ce);
			\end{tikzpicture}
		\end{center} 
	\end{example}
	
	An element $x \in P$ is said to be \textit{maximal} if there is no element $y \in P$ such that $x < y$, and \textit{minimal} if there is no element $y \in P$ such that $y < x$.
	
	\begin{lemma}\label{P_iP_j}
		If $i \neq j$ in $\mathcal{J}$ defined in \eqref{2_L_decomposition}, there does not exist a pair of elements $x < y$ in $P$ such that $x, y \in P_i \cap P_j$. Additionally, any element in $P_i \cap P_j$ is either a minimal or maximal element of $P$.
	\end{lemma}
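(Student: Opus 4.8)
The plan is to deduce both assertions from one structural principle: each comparable pair pins down its equivalence class, and a non-extremal element touches only one class. The first thing I would record is that whenever $x<y$, any two maximal chains both containing $x$ and $y$ share the comparable pair $\{x,y\}$ and are therefore $\approx$-related; hence all maximal chains through a fixed comparable pair $x<y$ lie in a single class, which I denote $\operatorname{cl}(x,y)\in\mathcal{J}$. By the defining formula \eqref{2_decomposition}, an element $w$ lies in $P_j$ exactly when some comparable pair having $w$ as an endpoint belongs to $L_j$.

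The core step is to show that a non-extremal element $w$ lies in exactly one class. If $w$ is neither minimal nor maximal then, by local finiteness, every maximal chain through $w$ is saturated and must contain both a lower cover $p$ of $w$ (an element $p<w$ with nothing strictly between) and an upper cover $q$ of $w$; indeed, if $w$ were the least element of such a chain, any element below $w$ could be adjoined, contradicting maximality, and dually on top. Given two maximal chains $l,l'$ through $w$, with $p\in l$ a lower cover and $q'\in l'$ an upper cover, I would extend $\{p,w,q'\}$ to a maximal chain $m$: then $l\approx m$ because both contain the pair $\{p,w\}$, and $m\approx l'$ because both contain $\{w,q'\}$, so $l$ and $l'$ lie in the same class. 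Thus all maximal chains through $w$ belong to one class, so $w$ lies in a unique $P_j$. The second assertion of the lemma is then immediate: any element of $P_i\cap P_j$ with $i\neq j$ meets two classes, hence cannot be non-extremal, i.e.\ it must be minimal or maximal.

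For the first assertion I would argue by contradiction: suppose $x<y$ with $x,y\in P_i\cap P_j$ and $i\neq j$. The second assertion already forces $x$ and $y$ to be extremal, and since $x<y$ this makes $x$ minimal and $y$ maximal. The goal is to exhibit, inside $L_i$, a single maximal chain passing through both $x$ and $y$; such a chain contains the pair $\{x,y\}$ and so would force $i=\operatorname{cl}(x,y)$, while the symmetric argument inside $L_j$ would force $j=\operatorname{cl}(x,y)$, contradicting $i\neq j$. To build that chain I would fix, via local finiteness, a saturated chain $x=z_0<z_1<\cdots<z_k=y$ in the finite interval $[x,y]$; because $x$ is minimal and $y$ maximal this saturated chain is itself maximal, so the standing hypothesis that every maximal chain has at least three elements gives $k\geq 2$, whence the interior vertices $z_1,\dots,z_{k-1}$ are non-extremal. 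Feeding these interior vertices into the uniqueness statement of the previous paragraph is what I would use to tie the class of the whole chain to $i$ (and to $j$).

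The step I expect to be the genuine obstacle is exactly this last construction. Since $x$ is minimal and $y$ maximal, the comparable pairs issuing from them may split among several classes — this branching at extremal elements is precisely what the decomposition is designed to tolerate — so it is \emph{not} automatic that the chain of $L_i$ witnessing $x\in P_i$ and the chain witnessing $y\in P_i$ can be threaded through a single maximal chain of $L_i$ containing both endpoints. The delicate point is to rule out the configuration in which $x$ and $y$ are linked inside one class only through intermediate chains that bridge past the interior of $[x,y]$ while avoiding a common chain through $x$ and $y$ themselves; controlling this rerouting through the non-extremal interior, and checking that the three-element hypothesis really excludes the degenerate bridgings, is where the real work of the proof will be concentrated.
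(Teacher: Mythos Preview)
Your argument for the second assertion is correct and is in fact more robust than the paper's. The paper proves the two parts in the opposite order, deducing ``non-extremal $\Rightarrow$ unique $P_j$'' from the first assertion; you instead give a direct, self-contained argument that any two maximal chains through a non-extremal $w$ are connected via a bridge chain through some $p<w<q'$. That argument is sound (and you do not actually need covers --- any $p\in l$ with $p<w$ and any $q'\in l'$ with $w<q'$ will do).

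The obstacle you flag for the first assertion, however, is not a mere technicality: it is fatal, because the first assertion is \emph{false}. Take $P=\{x,z,y,a_1,a_2,b_1,b_2\}$ with covering relations
\[
x\lessdot z\lessdot y,\qquad x\lessdot a_1\lessdot a_2,\qquad b_2\lessdot b_1\lessdot y,\qquad b_1\lessdot a_1.
\]
This is a connected, locally finite poset in which every maximal chain has at least three elements. The maximal chains are $\{x,z,y\}$, $\{x,a_1,a_2\}$, $\{b_2,b_1,y\}$, $\{b_2,b_1,a_1,a_2\}$; the last three share the pairs $\{a_1,a_2\}$ or $\{b_2,b_1\}$ and hence form a single class $L_2$, while $\{x,z,y\}$ meets each of them in at most one point and sits alone in $L_1$. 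Thus $P_1=\{x,z,y\}$, $P_2=\{x,a_1,a_2,b_1,b_2,y\}$, and $P_1\cap P_2=\{x,y\}$ with $x<y$.

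The paper's proof of the first assertion jumps over exactly the step you isolated: from $x,y\in P_i$ with $x<y$ it simply asserts the existence of a \emph{single} chain $l_1\in L_i$ containing both $x$ and $y$, and then concludes $l_1\approx l_2$. In the example above no chain of $L_2$ contains both $x$ and $y$, so this inference is invalid. Your instinct that the proof must be threaded through the interior of $[x,y]$, and that the real danger is a detour in $L_i$ that bypasses that interior, pinpoints precisely why the argument cannot be repaired: here the unique interior point $z$ belongs only to $P_1$, while the path from $x$ to $y$ inside $L_2$ runs entirely outside $[x,y]$. Note too that since the paper deduces the second assertion from the first, its proof of the second assertion is formally compromised as well; your direct argument salvages that half of the lemma.
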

	\begin{proof}
		We begin by asserting that any element in $P_i \cap P_j$ is isolated, meaning that no other element in $P_i \cap P_j$ can be compared with it. Suppose, for contradiction, that there exists a pair of elements $x < y$ contained in $P_i \cap P_j$. Then there exist chains $l_1 \in L_i$ and $l_2 \in L_j$ such that $x, y \in l_1$ and $x, y \in l_2$. It follows that $l_1 \approx l_2$, implying $P_i = P_j$ and $i = j$, which contradicts the assumption that $i \neq j$. Therefore, any element in $P_i \cap P_j$ is isolated.
		
		Now, suppose there exists an element $x \in P_i \cap P_j$ that is neither maximal nor minimal. Then there exist elements $u < x < v$ in $P$. According to the definition of $P_i$ and $P_j$ and the assumption that any maximal chain of $P$ has at least three elements, there exist elements $y_i \in P_i$ and $y_j \in P_j$ that can be compared with $x$, and there exist maximal chains $l_i \in L_i$ and $l_j \in L_j$ such that $x, y_i \in l_i$ and $x, y_j \in l_j$. Without loss of generality, suppose $x < y_i$. It is evident that $\{ u, x, v \}^+ \approx \{ u, x, y_i \}^+ \approx l_i$, so $u, x, v \in P_i$. Similarly, $u, x, v \in P_j$. This implies that both $u < x$ and $x < v$ are elements of $P_i \cap P_j$, which contradicts the previous conclusion. Thus, any element in $P_i \cap P_j$ must be either minimal or maximal.
	\end{proof}
	
	By the lemma above, for any $\alpha \in I(P, \mathcal{R})$, where $P$ is connected, we can decompose it as
	\begin{equation}
		\alpha = \sum_{j \in \mathcal{J}} \alpha'_j + \alpha^D, \quad \text{where } \alpha'_j = \sum_{x < y \in P_j} \alpha_{xy} e_{xy}, \text{ and } \alpha^D = \sum_{z \in P} \alpha_{zz} e_{zz}. \label{eq:2_alpha_decomposition}
	\end{equation}
	If $i \neq j$ and the product $\alpha'_i \beta'_j = \sum_{x < y \in P_i} \sum_{u < v \in P_j} \alpha_{xy} \beta_{uv} e_{xy} e_{uv}$ is non-zero, then there exist $x' < y'$ in $P_i$ and $u' < v'$ in $P_j$ such that $e_{xy} e_{uv} \neq 0$. Therefore, we conclude that $y = u$, implying that $y \in P_i \cap P_j$ and $x < y < v$, which contradicts Lemma \ref{P_iP_j}. This leads to the following corollary.
	
	\begin{corollary}
		For any $i \neq j$ in $\mathcal{J}$, let $\alpha'_i \in I(P_i, \mathcal{R})$ and $\beta'_j \in I(P_j, \mathcal{R})$, then $\alpha'_i \beta'_j = 0$.  \label{a_ib_j}
	\end{corollary}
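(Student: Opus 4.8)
The plan is to argue directly by expanding the product in the $e_{xy}$ basis and showing every term vanishes. Writing $\alpha'_i = \sum_{x < y \in P_i} \alpha_{xy} e_{xy}$ and $\beta'_j = \sum_{u < v \in P_j} \beta_{uv} e_{uv}$, the bilinearity of the multiplication gives $\alpha'_i \beta'_j = \sum \alpha_{xy}\beta_{uv}\, e_{xy}e_{uv}$, where the sum runs over $x < y$ in $P_i$ and $u < v$ in $P_j$. By the multiplication rules recorded in the preliminaries, $e_{xy}e_{uv}$ is nonzero only when $y = u$, in which case it equals $e_{xv}$. So it suffices to show that no term with $y = u$ can occur.

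First I would suppose, for contradiction, that some term survives, i.e.\ that there exist $x < y$ in $P_i$ and $u < v$ in $P_j$ with $y = u$. Then the common value $w = y = u$ lies in both $P_i$ and $P_j$, hence $w \in P_i \cap P_j$. Moreover $x < w$ and $w < v$ hold simultaneously, so $w$ is neither a minimal nor a maximal element of $P$.

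The final step is to invoke Lemma \ref{P_iP_j}: since $i \neq j$, every element of $P_i \cap P_j$ must be minimal or maximal in $P$. This contradicts the non-extremality of $w$ established above. Hence no surviving term exists, and therefore $\alpha'_i\beta'_j = 0$.

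This statement is essentially an immediate corollary of Lemma \ref{P_iP_j}, so I do not expect a genuine obstacle. The only point requiring care is verifying that the index $w = y = u$ is simultaneously bounded strictly below (by $x$) and strictly above (by $v$); this is exactly what lets us contradict the maximal-or-minimal conclusion of the lemma, rather than merely its weaker assertion that $P_i \cap P_j$ contains no comparable pair. I would make this double bounding explicit in the write-up so that the reduction to Lemma \ref{P_iP_j} is unambiguous.
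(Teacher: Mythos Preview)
Your proof is correct and follows essentially the same route as the paper: expand the product, observe that a nonzero contribution forces $y=u$, and then note that $x<y<v$ makes $y\in P_i\cap P_j$ a non-extremal element, contradicting Lemma~\ref{P_iP_j}. Your explicit remark that one needs the ``minimal or maximal'' clause of the lemma (rather than its no-comparable-pair clause) is a nice clarification the paper leaves implicit.
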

	
	\subsection{Summary of Notations}
	
	For clarity, we summarize the symbols and definitions that will be used frequently in the following sections:
	\begin{itemize}
		\item For any $x, y \in P$, $x \sim y$ indicates that $x$ and $y$ are comparable.
		\item Two elements $x$ and $y$ are \textit{connected} if there exist elements $u_0, u_1, \dots, u_n \in P$ such that $x \sim u_0$, $u_0 \sim u_1$, $\dots$, $u_{n-1} \sim u_n$, and $u_n \sim y$. A poset $P$ is \textit{connected} if any pair of elements in it are connected.
		\item An element $x$ is \textit{maximal} if there exists no element $y$ such that $x < y$, and \textit{minimal} if there exists no element $y$ such that $y < x$.
		\item A \textit{chain} in $P$ is a totally ordered subset of $P$.
		\item A chain is \textit{maximal} if adding any element from $P$ to it would result in it no longer being a chain.
		\item For any pair of chains $l', l''$ in $P$, we say $l' \approx l''$ if there exist elements $x < y \in P$ such that $x, y \in l' \cap l''$. We say $l'$ and $l''$ are \textit{connected} if there exist chains $l_0, l_1, \dots, l_n \in P$ such that $l' \approx l_0$, $l_0 \approx l_1$, $\dots$, $l_{n-1} \approx l_n$, and $l_n \approx l''$.
	\end{itemize}
	
	\section{Derivations of $I(P, \mathcal{R})$}
	
	In the study of additive biderivations, it is necessary to discuss some properties of additive derivations of $I(P, \mathcal{R})$, since an additive biderivation becomes an additive derivation when one of its arguments is fixed. Let $P$ be a locally finite poset, let $\mathcal{R}$ be a commutative ring with unity, and let $I(P, \mathcal{R})$ be the incidence algebra of $P$ over $\mathcal{R}$. Suppose $d$ is an additive derivation of $I(P, \mathcal{R})$.
	
	We will denote $d_{xy}(\alpha)$ to represent $d(\alpha)(x, y)$ for any $\alpha \in I(P, \mathcal{R})$ and $x, y \in P$. We begin by demonstrating a lemma that shows the structure of the value of $d(r e_{xy})$.
	
	\begin{lemma}\label{d_structure}
		For any $x \leq y \in P$ and $r \in \mathcal{R}$, we have 
		\begin{equation}
			d(r e_{xy}) = \sum_{p \leq x} d_{py}(r e_{xy}) e_{py} + \sum_{y < q} d_{xq}(r e_{xy}) e_{xq}. \label{eq:10}
		\end{equation}
	\end{lemma}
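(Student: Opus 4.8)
The plan is to reduce everything to the two multiplication rules for the basis elements, namely $e_{ab}e_{cd}=e_{ad}$ when $b=c$ and $e_{ab}e_{cd}=0$ when $b\neq c$, together with the behaviour of $d$ on the idempotents $e_{zz}$. The only property of $d$ that will be used is the Leibniz rule; additivity plays no role for this particular statement, since we work with the single element $re_{xy}$.

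First I would record the structure of $d(e_{zz})$. Applying $d$ to $e_{zz}=e_{zz}^2$ gives $d(e_{zz})=e_{zz}d(e_{zz})+d(e_{zz})e_{zz}$. Writing $g=d(e_{zz})$ and sandwiching between $e_{uu}$ on the left and $e_{vv}$ on the right, one uses $e_{uu}ge_{vv}=g_{uv}e_{uv}$ to compare $(u,v)$-entries: for $u\neq z$ and $v\neq z$ both terms on the right vanish, forcing $g_{uv}=0$; for $u=v=z$ one gets $g_{zz}=2g_{zz}$, hence $g_{zz}=0$. Thus $d(e_{zz})$ is supported on the row and column through $z$, i.e.
\[
  d(e_{zz}) = \sum_{q > z} d(e_{zz})_{zq}\, e_{zq} + \sum_{p < z} d(e_{zz})_{pz}\, e_{pz}.
\]

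Next, using $e_{xx}(re_{xy})e_{yy}=re_{xy}$ together with $e_{xx}(re_{xy})=re_{xy}=(re_{xy})e_{yy}$, I would apply the Leibniz rule to the product of three factors to obtain
\[
  d(re_{xy}) = d(e_{xx})\,(re_{xy}) + e_{xx}\,d(re_{xy})\,e_{yy} + (re_{xy})\,d(e_{yy}),
\]
and then analyse the three summands. The middle term equals $d(re_{xy})_{xy}\,e_{xy}$ by the sandwich identity, contributing only at position $(x,y)$. For the first term, substituting the expression for $d(e_{xx})$ and multiplying on the right by $e_{xy}$ kills the row-part (since $e_{xq}e_{xy}=0$ for $q>x$) and leaves $r\sum_{p<x} d(e_{xx})_{px}\,e_{py}$, supported at positions $(p,y)$ with $p<x$. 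Symmetrically, multiplying $d(e_{yy})$ on the left by $e_{xy}$ kills the column-part and leaves $r\sum_{q>y} d(e_{yy})_{yq}\,e_{xq}$, supported at positions $(x,q)$ with $q>y$.

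Combining the three contributions shows that $d(re_{xy})$ is supported precisely on the positions $(p,y)$ with $p\le x$ (the middle term supplying the case $p=x$) and $(x,q)$ with $q>y$; reading off the coefficient at each such position as the corresponding entry $d_{py}(re_{xy})$ or $d_{xq}(re_{xy})$ yields \eqref{eq:10}. I expect the main work to be the careful entry-by-entry bookkeeping in the idempotent step, and the verification that the cross-products $e_{xq}e_{xy}$ and $e_{xy}e_{py}$ vanish for the "wrong" indices; once the support of each of the three summands is pinned down, the identification of the coefficients is immediate.
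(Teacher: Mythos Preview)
Your argument is correct. The paper takes a shorter route: it writes $re_{xy}=e_{xx}\cdot(re_{xy})$, applies Leibniz once, and sandwiches between $e_{uu}$ and $e_{vv}$ for $u\neq x$, $v\neq y$; then $e_{uu}e_{xx}=0$ and $(re_{xy})e_{vv}=0$ force $d_{uv}(re_{xy})=0$. As written this only confines the support to the union of row $x$ and column $y$, leaving the sharper restrictions $p\le x$ and $q>y$ in \eqref{eq:10} to an easy extra step that the paper does not spell out. Your approach---first determining the shape of $d(e_{zz})$, then applying the three-factor Leibniz rule to $e_{xx}(re_{xy})e_{yy}$---delivers the exact support in one stroke and, as a bonus, yields the explicit off-diagonal coefficients $d_{py}(re_{xy})=r\,d_{px}(e_{xx})$ and $d_{xq}(re_{xy})=r\,d_{yq}(e_{yy})$, which is precisely the content of the paper's subsequent Lemma~\ref{d_structure_like_linear}. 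The trade-off is a preliminary computation on the idempotents that the paper avoids, in exchange for a self-contained argument that absorbs the next lemma.
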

	\begin{proof}
		Consider a fixed pair $x \leq y \in P$ and an arbitrary element $r \in \mathcal{R}$. Suppose $u \leq v \in P$ with $u \neq x$ and $v \neq y$. We examine the expression $e_{uu} d(r e_{xy}) e_{vv}$ by multiplying $d(r e_{xy})$ on the left by $e_{uu}$ and on the right by $e_{vv}$:
		
		\begin{equation}
			\begin{aligned}
				e_{uu} d(r e_{xy}) e_{vv} &= e_{uu} d(r e_{xx} e_{xy}) e_{vv} \\
				&= e_{uu} e_{xx} d(r e_{xy}) e_{vv} + e_{uu} d(e_{xx}) r e_{xy} e_{vv} \\
				&= 0.
			\end{aligned}
		\end{equation}
		
		Since $e_{uu} d(r e_{xy}) e_{vv} = d_{uv}(r e_{xy}) e_{uv}$, it follows that $d_{uv}(r e_{xy}) = 0$ unless $u = x$ or $v = y$. This establishes the desired equality \eqref{eq:10}.
	\end{proof}
	
	The proof of the aforementioned lemma allows us to derive the following corollary.
	
	\begin{corollary}\label{d_uv(e_xy)=0}
		For any $x \leq y$, $u \leq v \in P$, we have $d_{uv}(r e_{xy}) = 0$ unless the elements $x, y, u, v$ satisfy one of the following cases: (1) $u = x$, $v \leq y$; (2) $u \leq x$, $v = y$.
	\end{corollary}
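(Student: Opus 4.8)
The plan is to derive Corollary~\ref{d_uv(e_xy)=0} as an immediate consequence of Lemma~\ref{d_structure}, so that no computation beyond that lemma is needed and the argument stays short. The guiding observation is that \eqref{eq:10} already expresses $d(r e_{xy})$ as a sum of terms, each a scalar multiple of a single matrix unit $e_{uv}$, and that the index pairs occurring there fall into two explicit families. Reading off the coefficient of $e_{uv}$ from this expansion will show that $d_{uv}(r e_{xy})$ can fail to vanish only when $(u,v)$ is one of those index pairs.

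First I would fix $x \le y$ in $P$ and $r \in \mathcal{R}$, take an arbitrary pair $u \le v$ with $d_{uv}(r e_{xy}) \neq 0$, and expand $d(r e_{xy})$ using \eqref{eq:10}. Since $d_{uv}(r e_{xy})$ is precisely the coefficient of $e_{uv}$ in that expansion, the nonvanishing hypothesis forces $e_{uv}$ to occur in one of the two sums, and I would split into the two corresponding cases. If $e_{uv}$ occurs in the first sum $\sum_{p \le x} d_{py}(r e_{xy})\,e_{py}$, then $(u,v) = (p,y)$ for some $p \le x$, so that $v = y$ and $u \le x$, which is exactly case~(2). If instead $e_{uv}$ occurs in the second sum, then its row index satisfies $u = x$, which places $(u,v)$ in case~(1). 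Since these two families exhaust the index pairs appearing in \eqref{eq:10}, every pair with $d_{uv}(r e_{xy}) \neq 0$ satisfies case~(1) or case~(2); this is exactly the contrapositive of the ``unless'' statement to be proved.

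I do not anticipate a genuine obstacle, as the corollary is essentially a re-reading of Lemma~\ref{d_structure}; the only point demanding a little care is the bookkeeping at the overlap. The pair $(x,y)$ arises in the first sum with $p = x$ and is compatible with both listed cases, so when passing from the two sums of \eqref{eq:10} to the two cases one must check that no admissible index pair is omitted or double-counted. Once this is checked, the corollary follows directly, with Lemma~\ref{d_structure} supplying the entire content of the argument.
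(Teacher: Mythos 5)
Your strategy of reading the corollary directly off the expansion \eqref{eq:10} is the paper's own route (the paper offers nothing beyond citing Lemma \ref{d_structure}), and your handling of the first sum is correct: $(u,v)=(p,y)$ with $p\leq x$ is exactly case (2). The problem is the second sum. Its terms are $e_{xq}$ with $y<q$, so the index pairs it contributes satisfy $u=x$ and $v>y$, which is the \emph{reverse} of the inequality in case (1) as stated ($u=x$, $v\leq y$). Your step ``its row index satisfies $u=x$, which places $(u,v)$ in case (1)'' silently discards the column constraint; in fact every pair coming from the second sum satisfies neither of the two printed cases. Consequently your argument does not prove the statement as given --- and no argument can, because that statement is false as printed: for the inner derivation $d(\alpha)=\alpha\gamma-\gamma\alpha$ one computes $d_{xq}(re_{xy})=r\gamma_{yq}$ for any $q>y$ (equivalently, Lemma \ref{d_structure_like_linear}(2) gives $d_{xq}(re_{xy})=r\,d_{yq}(e_{yy})$, which need not vanish), yet $(u,v)=(x,q)$ with $q>y$ is excluded by both cases (1) and (2).

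What your derivation actually establishes is the corrected statement in which case (1) reads $u=x$, $y\leq v$; this is precisely the support of \eqref{eq:10}, and it is the version the paper relies on downstream. For instance, in Case 4(c) of the proof of Theorem \ref{[e_xy,e_uv]=0}, the vanishing $b_{xz}(r_1e_{xz},r_2e_{xy})=0$ for $x<z<y$ is deduced by applying the corollary in the second argument: with evaluation pair $(x,z)$ and matrix unit $(x,y)$, the deduction requires that nonvanishing force $v\geq y$ (here $v=z<y$), and it would break under the printed condition $v\leq y$, which the pair $(x,z)$ satisfies. So the mathematical content of your proof is right and matches the intended corollary, but as a proof of the stated claim it has a genuine gap at the second sum: you must either flag the misprint and prove the corrected statement, or at minimum not assert that pairs with $v>y$ fall under a case demanding $v\leq y$. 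Your overlap check for the boundary pair $(x,y)$ was careful, but the direction of the inequality in case (1) is where the care was actually needed.
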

	
	In describing the structure of biderivations, we will try to prove that some of them are equivalent. To do this, we introduce a number of instrumental lemmas as described below.
	
	\begin{lemma}\label{d_structure_like_linear}
		For any $x \leq y \in P$ and $r \in \mathcal{R}$, we have:
		\begin{enumerate}
			\item[(1)] For any $p < x$, $d_{py}(r e_{xy}) = rd_{py}(e_{xy}) = r d_{px}(e_{xx})$;
			\item[(2)] For any $q > y$, $d_{xq}(r e_{xy}) = rd_{xq}(e_{xy}) = r d_{yq}(e_{yy})$.
		\end{enumerate}
	\end{lemma}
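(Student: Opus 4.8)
The plan is to derive both identities from the Leibniz rule applied to well-chosen factorizations of $re_{xy}$, isolating one matrix entry at a time by sandwiching between diagonal idempotents and reading it off with the entry-extraction identity $e_{aa} f e_{bb} = f_{ab} e_{ab}$ (a special case of \eqref{eq:e_{xx}fe_{yy}=f_{xy}e_{xy}}). The one genuine issue is that $d$ is only additive, not $\mathcal{R}$-linear, so I cannot simply pull $r$ through $d$; the device that fixes this is to factor $re_{xy}$ so that the scalar is carried by a factor that survives the sandwiching as an honest multiple $r e_{yy}$ (or $r e_{xx}$), while the derivative lands on the scalar-free generator $e_{xy}$.

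For (1), I would fix $p < x$. First I would write $re_{xy} = e_{xy}(re_{yy})$ and apply Leibniz to get $d(re_{xy}) = e_{xy} d(re_{yy}) + d(e_{xy})\, r e_{yy}$; multiplying on the left by $e_{pp}$ and on the right by $e_{yy}$ kills the first summand because $e_{pp}e_{xy}=0$ (as $p\neq x$), while in the second the scalar $r$ factors out, giving $d_{py}(re_{xy}) = r\, d_{py}(e_{xy})$. For the remaining equality it is enough to take $r=1$ and prove $d_{py}(e_{xy}) = d_{px}(e_{xx})$: here I would use $e_{xy} = e_{xx}e_{xy}$, so that $d(e_{xy}) = e_{xx}d(e_{xy}) + d(e_{xx})e_{xy}$, and the same sandwiching between $e_{pp}$ and $e_{yy}$ kills the first term (since $e_{pp}e_{xx}=0$) and leaves $e_{pp}d(e_{xx})e_{xy}$, whose $(p,y)$-entry is $d_{px}(e_{xx})$. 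Multiplying through by $r$ assembles the chain $d_{py}(re_{xy}) = r\,d_{py}(e_{xy}) = r\,d_{px}(e_{xx})$.

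Part (2) I expect to be completely symmetric under interchanging left and right multiplication. Fixing $q > y$, for the first equality I would use $re_{xy} = (re_{xx})e_{xy}$ and sandwich between $e_{xx}$ and $e_{qq}$: the term $d(re_{xx})e_{xy}$ vanishes because $e_{xy}e_{qq}=0$ (as $y\neq q$), yielding $d_{xq}(re_{xy}) = r\,d_{xq}(e_{xy})$; for the second I would use $e_{xy} = e_{xy}e_{yy}$, where the term $d(e_{xy})e_{yy}$ vanishes because $e_{yy}e_{qq}=0$, and extracting the $(x,q)$-entry of $e_{xy}d(e_{yy})$ gives $d_{xq}(e_{xy}) = d_{yq}(e_{yy})$. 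I do not anticipate a serious obstacle anywhere: the computations are short, and the only thing to watch is the placement of $r$ on the idempotent factor that the sandwiching preserves rather than differentiates, which is precisely what converts the non-linear map $d$ into the scalar relations claimed.
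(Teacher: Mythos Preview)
Your argument is correct and uses the same technique as the paper: apply Leibniz to a factorization of $re_{xy}$ through an idempotent, then isolate the $(p,y)$- or $(x,q)$-entry by sandwiching with diagonal idempotents. The only cosmetic difference is that the paper uses the single factorization $re_{xy}=e_{xx}\cdot(re_{xy})$ to obtain $d_{py}(re_{xy})=r\,d_{px}(e_{xx})$ in one stroke and then specializes $r=1$ for the middle term, whereas you prove the two equalities separately via $re_{xy}=e_{xy}(re_{yy})$ and $e_{xy}=e_{xx}e_{xy}$; both routes are equally valid.
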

	\begin{proof}
		We will demonstrate part (1), and the proof of part (2) follows analogously. Consider fixed elements $x \leq y \in P$ and $r \in \mathcal{R}$. For any $p < x$, using equation \eqref{eq:e_{xx}fe_{yy}=f_{xy}e_{xy}}, we obtain
		\begin{equation}
			\begin{aligned}
				d_{py}(r e_{xy}) e_{py} &= e_{pp} d(r e_{xy}) e_{yy} \\
				&= e_{pp} \left( e_{xx} d(r e_{xy}) + d(e_{xx}) r e_{xy} \right) e_{yy} \\ 
				&= r d_{px}(e_{xx}) e_{py}.
			\end{aligned}
		\end{equation}
		Therefore, we deduce that $d_{py}(r e_{xy}) = r d_{px}(e_{xx})$. Setting $r$ to be the unity element of $\mathcal{R}$ yields $d_{py}(e_{xy}) = d_{px}(e_{xx})$, thereby establishing part (1).
	\end{proof}
	
	\begin{lemma}\label{d_xy(e_xx)+d_xy(e_yy)=0}
		For any $x < y \in P$, we have $d_{xy}(e_{xx}) + d_{xy}(e_{yy}) = 0$.
	\end{lemma}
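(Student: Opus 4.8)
The plan is to exploit the orthogonality relation $e_{xx} e_{yy} = 0$, which holds precisely because $x < y$ forces $x \neq y$. Applying the additive derivation $d$ to this product and using the Leibniz rule gives
\begin{equation}
	0 = d(e_{xx} e_{yy}) = e_{xx}\, d(e_{yy}) + d(e_{xx})\, e_{yy}. \nonumber
\end{equation}
The identity $d_{xy}(e_{xx}) + d_{xy}(e_{yy}) = 0$ lives in the $(x,y)$-entry, so the natural next step is to isolate that entry by sandwiching the displayed equation between $e_{xx}$ on the left and $e_{yy}$ on the right.

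Carrying out this sandwiching, I would compute $e_{xx}\big(e_{xx} d(e_{yy}) + d(e_{xx}) e_{yy}\big) e_{yy}$. Using the idempotence $e_{xx}^2 = e_{xx}$ and $e_{yy}^2 = e_{yy}$, the first summand reduces to $e_{xx}\, d(e_{yy})\, e_{yy}$ and the second to $e_{xx}\, d(e_{xx})\, e_{yy}$. Now invoke the entry-extraction formula \eqref{eq:e_{xx}fe_{yy}=f_{xy}e_{xy}}, which for a sandwich of the shape $e_{xx} f e_{yy}$ yields $f_{xy}\, e_{xy}$. Applying it to each summand converts the relation into
\begin{equation}
	\big(d_{xy}(e_{xx}) + d_{xy}(e_{yy})\big)\, e_{xy} = 0. \nonumber
\end{equation}
Since $x \leq y$, the element $e_{xy}$ is a nonzero basis element, so the scalar coefficient must vanish, giving the claim.

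There is no serious obstacle here; the argument is a direct two-line manipulation. The only point requiring a little care is the bookkeeping in the sandwiching step: one must confirm that the cross terms behave as claimed (the $e_{xx} d(e_{xx}) e_{yy}$ term survives intact rather than collapsing) and that the formula \eqref{eq:e_{xx}fe_{yy}=f_{xy}e_{xy}} is applied with the correct index matching, so that both surviving terms genuinely contribute the same basis element $e_{xy}$. Once that matching is verified, the conclusion is immediate.
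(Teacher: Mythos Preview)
Your proposal is correct and follows essentially the same approach as the paper: both arguments apply the derivation to the product $e_{xx}e_{yy}=0$, sandwich the resulting identity between $e_{xx}$ and $e_{yy}$, and read off the $(x,y)$-entry via formula~\eqref{eq:e_{xx}fe_{yy}=f_{xy}e_{xy}}. The only cosmetic difference is that the paper begins directly with $e_{xx}\,d(e_{xx}e_{yy})\,e_{yy}$ rather than first expanding $d(e_{xx}e_{yy})$ and then sandwiching.
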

	\begin{proof}
		Let $x < y \in P$ be fixed. Observe that
		\begin{equation}
			\begin{aligned}
				0 &= e_{xx} d(e_{xx} e_{yy}) e_{yy} \\
				&= e_{xx} d(e_{xx}) e_{yy} + e_{xx} d(e_{yy}) e_{yy} \\
				&= \left( d_{xy}(e_{xx}) + d_{xy}(e_{yy}) \right) e_{xy}.
			\end{aligned}
		\end{equation}
		Since the above expression equals zero and $e_{xy}$ is a non-zero element of the incidence algebra, it follows that $d_{xy}(e_{xx}) + d_{xy}(e_{yy}) = 0$.
	\end{proof}
	
	\begin{lemma}\label{d_xz=d_xy+d_yz}
		For any $x < y < z \in P$, we have $d_{xz}(e_{xz}) = d_{xy}(e_{xy}) + d_{yz}(e_{yz})$.
	\end{lemma}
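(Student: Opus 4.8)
The plan is to use the factorization $e_{xz} = e_{xy} e_{yz}$, valid because $x < y < z$, and then extract the $(x,z)$-entry from the resulting derivation identity.

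First, since $d$ is a derivation,
\[
d(e_{xz}) = d(e_{xy} e_{yz}) = e_{xy}\, d(e_{yz}) + d(e_{xy})\, e_{yz}.
\]
To isolate the scalar $d_{xz}(e_{xz})$, I would multiply this identity on the left by $e_{xx}$ and on the right by $e_{zz}$. By \eqref{eq:e_{xx}fe_{yy}=f_{xy}e_{xy}} the left-hand side becomes $e_{xx} d(e_{xz}) e_{zz} = d_{xz}(e_{xz})\, e_{xz}$, so all the content lies in simplifying the right-hand side.

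For the right-hand side I would treat the two terms separately, inserting the diagonal idempotent $e_{yy}$ to bring each product into the diagonal shape required by \eqref{eq:e_{xx}fe_{yy}=f_{xy}e_{xy}}. Using $e_{xx} e_{xy} = e_{xy} = e_{xy} e_{yy}$ and $e_{yz} e_{zz} = e_{yz} = e_{yy} e_{yz}$, the first term gives $e_{xy}\bigl(e_{yy} d(e_{yz}) e_{zz}\bigr) = d_{yz}(e_{yz})\, e_{xy} e_{yz} = d_{yz}(e_{yz})\, e_{xz}$, while the second gives $\bigl(e_{xx} d(e_{xy}) e_{yy}\bigr) e_{yz} = d_{xy}(e_{xy})\, e_{xy} e_{yz} = d_{xy}(e_{xy})\, e_{xz}$.

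Equating the coefficients of the nonzero basis element $e_{xz}$ then yields $d_{xz}(e_{xz}) = d_{xy}(e_{xy}) + d_{yz}(e_{yz})$, as claimed. I do not anticipate a genuine obstacle: the whole argument is just the derivation identity applied to a single product followed by the standard ``sandwich'' projection onto the $(x,z)$-entry. The only point demanding care is the correct bookkeeping of the idempotent products $e_{xx}e_{xy}$, $e_{yz}e_{zz}$, and $e_{xy}e_{yz}$ when reducing each term to the form covered by \eqref{eq:e_{xx}fe_{yy}=f_{xy}e_{xy}}, so the work is routine computation rather than any conceptual difficulty.
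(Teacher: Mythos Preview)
Your proposal is correct and follows essentially the same argument as the paper: both sandwich the identity $d(e_{xz}) = d(e_{xy}e_{yz}) = d(e_{xy})e_{yz} + e_{xy}d(e_{yz})$ between $e_{xx}$ and $e_{zz}$ and read off the coefficient of $e_{xz}$. Your explicit insertion of the intermediate idempotent $e_{yy}$ is a harmless elaboration of the same computation.
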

	\begin{proof}
		Let $x < y < z \in P$ be fixed. Then,
		\begin{equation}
			\begin{aligned}
				d_{xz}(e_{xz}) e_{xz} &= e_{xx} d(e_{xy} e_{yz}) e_{zz} \\
				&= e_{xx} \left( d(e_{xy}) e_{yz} + e_{xy} d(e_{yz}) \right) e_{zz} \\
				&= \left( d_{xy}(e_{xy}) + d_{yz}(e_{yz}) \right) e_{xz}.
			\end{aligned}
		\end{equation}
		Since $e_{xz}$ is a non-zero element of the incidence algebra, we deduce that $d_{xz}(e_{xz}) = d_{xy}(e_{xy}) + d_{yz}(e_{yz})$.
	\end{proof}
	
	\section{Biderivations of $I(P, \mathcal{R})$}
	
	In this section, we will employ the properties of derivations derived in Section 3 to prove our main theorem (Theorem \ref{final}), which elucidates the structure of additive biderivations on the incidence algebra $I(P, \mathcal{R})$. Let $P$ be a locally finite poset, let $\mathcal{R}$ be a commutative ring with unity. In this section, we will use the notation $b_{xy}(\alpha, \beta)$ to denote the value $b(\alpha, \beta)(x, y)$ for any $\alpha, \beta \in I(P, \mathcal{R})$ and $x, y \in P$.
	
	We begin by considering a corollary that can be readily extended from Lemma \ref{d_structure}.
	
	\begin{corollary}\label{corollary1}
		For any $x \leq y$, $u \leq v \in P$, and $r_1, r_2 \in \mathcal{R}$, we have
		\begin{equation}
			b(r_1 e_{xy}, r_2 e_{uv}) = 
			\begin{cases}
				b_{xv}(r_1 e_{xy}, r_2 e_{uv}) e_{xv} + b_{uy}(r_1 e_{xy}, r_2 e_{uv}) e_{uy}, & \text{if } x \neq u, y \neq v; \\
				\displaystyle \sum_{y \leq q, v \leq q} b_{xq}(r_1 e_{xy}, r_2 e_{xv}) e_{xq}, & \text{if } x = u, y \neq v; \\
				\displaystyle \sum_{p \leq x, p \leq u} b_{py}(r_1 e_{xy}, r_2 e_{uy}) e_{py}, & \text{if } x \neq u, y = v; \\
				\displaystyle \sum_{p \leq x} b_{py}(r_1 e_{xy}, r_2 e_{xy}) e_{py} + \sum_{y < q} b_{xq}(r_1 e_{xy}, r_2 e_{xy}) e_{xq}, & \text{if } x = u, y = v.
			\end{cases} \label{eq:corollary1}
		\end{equation} 
	\end{corollary}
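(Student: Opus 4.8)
The key idea is that a biderivation restricts to a derivation in each slot, so Lemma \ref{d_structure} can be applied twice. First I would fix the second argument: the map $\alpha \mapsto b(\alpha, r_2 e_{uv})$ is an additive derivation of $I(P,\mathcal{R})$, so applying Lemma \ref{d_structure} at $\alpha = r_1 e_{xy}$ gives
\begin{equation}
	b(r_1 e_{xy}, r_2 e_{uv}) = \sum_{p \leq x} b_{py}(r_1 e_{xy}, r_2 e_{uv})\, e_{py} + \sum_{y < q} b_{xq}(r_1 e_{xy}, r_2 e_{uv})\, e_{xq}. \tag{$*$}
\end{equation}
Thus every nonzero entry $(s,t)$ of $b(r_1 e_{xy}, r_2 e_{uv})$ has either $t = y$ and $s \leq x$, or $s = x$ and $t \geq y$. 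Symmetrically, fixing the first argument makes $\beta \mapsto b(r_1 e_{xy}, \beta)$ a derivation, and Lemma \ref{d_structure} at $\beta = r_2 e_{uv}$ yields
\begin{equation}
	b(r_1 e_{xy}, r_2 e_{uv}) = \sum_{p \leq u} b_{pv}(r_1 e_{xy}, r_2 e_{uv})\, e_{pv} + \sum_{v < q} b_{uq}(r_1 e_{xy}, r_2 e_{uv})\, e_{uq}, \tag{$**$}
\end{equation}
so every nonzero entry $(s,t)$ also has either $t = v$ and $s \leq u$, or $s = u$ and $t \geq v$.

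Since $(*)$ and $(**)$ describe the same element, its support must lie in the intersection of the two index sets, and the whole statement then follows by a case analysis on the coincidences between $x,u$ and $y,v$. When $x = u$ and $y = v$, identity $(*)$ is already the fourth line of \eqref{eq:corollary1}, so nothing more is needed. When $x \neq u$ and $y \neq v$, an entry of the form $(s, y)$ with $s \leq x$ can satisfy the second constraint only through $s = u$ (the alternative $t = v$ is excluded by $y \neq v$), producing the index $(u, y)$; dually, an entry $(x, t)$ with $t \geq y$ can meet the second constraint only through $t = v$, producing $(x, v)$. Hence the support is contained in $\{(u, y),(x, v)\}$, which is the first line of \eqref{eq:corollary1}.

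For the mixed case $x = u$, $y \neq v$, both constraints force the first coordinate to be $x$, while combining $t \geq y$ from $(*)$ with $t \geq v$ from $(**)$ confines the second coordinate to those $q$ with $y \leq q$ and $v \leq q$; this is the second line, and the remaining case $x \neq u$, $y = v$ is its transpose, giving the third line. The only point requiring care is that a forced index such as $(u, y)$ or $(x, v)$ need not be a comparable pair in $P$; whenever it fails to be, the corresponding basis element is simply absent and its coefficient is taken to be zero, so the displayed formulas remain valid. I do not expect any genuine obstacle beyond this bookkeeping, since the two applications of Lemma \ref{d_structure} already pin down the support and the coefficients are left exactly as the symbols appearing in \eqref{eq:corollary1}.
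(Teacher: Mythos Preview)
Your proposal is correct and is exactly the argument the paper has in mind: the corollary is stated there as an immediate extension of Lemma~\ref{d_structure}, and your two applications of that lemma (once in each slot) followed by intersecting the resulting supports is precisely how one ``readily extends'' it. The case analysis and the convention that $e_{st}$ (and hence the whole term) vanishes when $s\not\le t$ are handled correctly.
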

	
	The preliminary stage will entail a reduction in the complexity of the structure of $b$. This will be achieved by establishing a sufficient condition that characterizes the circumstances under which b is equal to zero.

	\begin{lemma}\label{cant compare}
		For $x \leq y$, $u \leq v \in P$, if at least one pair among $\{x, y, u, v\}$ is not comparable, then $b(r_1 e_{xy}, r_2 e_{uv}) = 0$ for any $r_1, r_2 \in \mathcal{R}$. 
	\end{lemma}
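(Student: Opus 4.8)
The plan is to split the argument according to the four cases of Corollary \ref{corollary1}, using throughout that for fixed $\beta$ the map $\alpha\mapsto b(\alpha,\beta)$ is a derivation, and likewise for fixed $\alpha$, so that every lemma of Section~3 applies to each slot of $b$ separately. Since $x\le y$ and $u\le v$ are given, the only pairs in $\{x,y,u,v\}$ that can fail to be comparable are $(x,u),(x,v),(y,u),(y,v)$; the hypothesis says at least one of these is incomparable, and I want to read off from the support of $b(r_1e_{xy},r_2e_{uv})$ that this is impossible unless the value is $0$.

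The two easy cases are $x=u,\,y=v$, where $\{x,y,u,v\}=\{x,y\}$ is a chain and the hypothesis is vacuous, and the ``crossed'' case $x\ne u,\,y\ne v$. In the latter, Corollary \ref{corollary1} gives $b(r_1e_{xy},r_2e_{uv})=b_{xv}e_{xv}+b_{uy}e_{uy}$, so it suffices to show each coefficient can be nonzero only when all of $x,y,u,v$ are mutually comparable. I would argue this for $b_{xv}$ as follows: if $b_{xv}\ne0$, then Lemma \ref{d_structure} in the first slot (where $v\ne y$) forces $v>y$ and in the second slot (where $x\ne u$) forces $x<u$; Lemma \ref{d_structure_like_linear}(2) then rewrites $b_{xv}(r_1e_{xy},r_2e_{uv})=r_1b_{yv}(e_{yy},r_2e_{uv})$, and Corollary \ref{d_uv(e_xy)=0} applied to the derivation $b(e_{yy},-)$ at the entry $(y,v)$ forces $y\le u$. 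Chaining these yields the chain $x\le y\le u\le v$, so any incomparability makes $b_{xv}=0$; the coefficient $b_{uy}$ is disposed of by the mirror computation, which yields $u<x\le v<y$.

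The remaining cases $x=u,\,y\ne v$ and $x\ne u,\,y=v$ are symmetric and are where the real work lies, since here the support given by Corollary \ref{corollary1} is a sum over all common upper (resp. lower) bounds, and such bounds may well exist even when $y\nsim v$ (resp. $x\nsim u$). Concentrating on $x=u,\,y\ne v$: the only possibly-incomparable pair is $(y,v)$, so the hypothesis reads $y\nsim v$, and then any common upper bound $q$ of $y$ and $v$ automatically satisfies $q>y$ and $q>v$. I would reduce $b_{xq}(r_1e_{xy},r_2e_{xv})=r_1b_{yq}(e_{yy},r_2e_{xv})$ via Lemma \ref{d_structure_like_linear}(2), and then apply Corollary \ref{d_uv(e_xy)=0} to $b(e_{yy},-)$ at the entry $(y,q)$: its two surviving cases demand $q\le v$ or $q=v$, both contradicting $q>v$, so the coefficient vanishes; the case $x\ne u,\,y=v$ is the dual, using Lemma \ref{d_structure_like_linear}(1) and common lower bounds $p<x,\,p<u$.

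The main obstacle is exactly this last point: unlike the crossed case, incomparability does not empty the support directly, so a support count alone cannot finish the proof. The crux is to peel off the scalar and the ``$y$-end'' of $e_{xy}$ through Lemma \ref{d_structure_like_linear} so as to land on a value $b(e_{yy},\ldots)$ (or $b(e_{xx},\ldots)$) whose nonvanishing entries are pinned, by Corollary \ref{d_uv(e_xy)=0}, to the exact positions that the incomparability forbids. One should also verify that the degenerate overlaps (e.g. $x=y$) do not sneak back in; these are excluded because $u\le v$ together with $x=u$ (resp. $u\le v=y$) would force the supposedly incomparable pair to be comparable.
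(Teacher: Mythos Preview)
Your proof is correct and uses the same ingredients as the paper (Corollary~\ref{corollary1}, Lemma~\ref{d_structure}/Corollary~\ref{d_uv(e_xy)=0}, and Lemma~\ref{d_structure_like_linear}); the only difference is organizational---you split by the four shapes of Corollary~\ref{corollary1} and argue by contrapositive that nonvanishing forces a chain, whereas the paper splits by which pair $(x,u),(x,v),(y,u),(y,v)$ is incomparable and then sub-splits by the Corollary~\ref{corollary1} shape. Your handling of the $x=u,\,y\ne v$ case (peeling to $b_{yq}(e_{yy},r_2e_{xv})$ and killing it via the support constraints, using that $y\ne x$ since $x\le v$ but $y\nsim v$) is exactly the dual of the paper's Case~1 subcase $y=v$, so the two proofs are interchangeable.
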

	\begin{proof}
		Consider $x \leq y$, $u \leq v \in P$. Suppose at least one pair among $\{x, y, u, v\}$ is not comparable. This situation can be divided into four cases: (1) $x \nsim u$; (2) $x \nsim v$; (3) $y \nsim u$; (4) $y \nsim v$. 
		
		\begin{itemize}[align=left, itemindent=-2em]
			\item[\textbf{Case 1}:] Suppose $x \nsim u$. We consider two subcases: $y \neq v$ and $y = v$.
			\par If $y \neq v$, by Corollary \ref{corollary1}, we have
			\begin{equation}
				b(r_1 e_{xy}, r_2 e_{uv}) = b_{xv}(r_1 e_{xy}, r_2 e_{uv}) e_{xv} + b_{uy}(r_1 e_{xy}, r_2 e_{uv}) e_{uy}.
			\end{equation}
			According to Corollary \ref{d_uv(e_xy)=0}, both $b_{xv}(r_1 e_{xy}, r_2 e_{uv})$ and $b_{uy}(r_1 e_{xy}, r_2 e_{uv})$ are zero because $x \nsim u$. 
			
			\par If $y = v$, we obtain that
			\begin{equation}
				\begin{aligned}
                    &~~~~b(r_1 e_{xy}, r_2 e_{uy}) \\
					&= \sum_{p \leq x, p \leq u} b_{py}(r_1 e_{xy}, r_2 e_{uy}) e_{py} \\
					&= \sum_{p < x, p < u} b_{py}(r_1 e_{xy}, r_2 e_{uy}) e_{py} + b_{xy}(r_1 e_{xy}, r_2 e_{uy}) e_{xy} + b_{uy}(r_1 e_{xy}, r_2 e_{uy}) e_{uy}.
				\end{aligned} \label{eq:20}
			\end{equation}
			In \eqref{eq:20}, both $b_{xy}(r_1 e_{xy}, r_2 e_{uy}) e_{xy}$ and $b_{uy}(r_1 e_{xy}, r_2 e_{uy}) e_{uy}$ are zero by Corollary \ref{d_uv(e_xy)=0}. For the remaining terms in \eqref{eq:20}, using Lemma \ref{d_structure_like_linear} on the first argument of each term, we have
			\begin{equation}
				\sum_{p < x, p < u} b_{py}(r_1 e_{xy}, r_2 e_{uy}) e_{py} = \sum_{p < x, p < u} b_{px}(r_1 e_{xx}, r_2 e_{uy}) e_{py} = 0. \label{eq:21}
			\end{equation} 
			Noting that $p < u$ and $x < y$ (If $x = y$, we get $u < x = v$, that contradicts $x \nsim u$.), $b_{px}(r_1 e_{xx}, r_2 e_{uy}) e_{py}$ in \eqref{eq:21} is zero by Corollary \ref{d_uv(e_xy)=0}. Therefore, the lemma is proved when $x \nsim u$.
			
			\item[\textbf{Case 2}:] If $x \nsim v$, then $x \neq u$ and $y \neq v$. From Corollary \ref{corollary1}, we have
			\begin{equation}
				b(r_1 e_{xy}, r_2 e_{uv}) = b_{uy}(r_1 e_{xy}, r_2 e_{uv}) e_{uy}. 
			\end{equation}
			We assert that either $u < x$ or $u \nsim x$, because if $x \leq u$, then $x \leq u \leq v$, which contradicts $x \nsim v$. For $b_{uy}(r_1 e_{xy}, r_2 e_{uv})$, it is zero by Lemma \ref{d_structure} if $u \nsim x$. If $u < x$, by Lemma \ref{d_structure}, we get
			\begin{equation}
				b_{uy}(r_1 e_{xy}, r_2 e_{uv}) = b_{ux}(r_1 e_{xx}, r_2 e_{uv}) = 0.
			\end{equation}
			Therefore, the lemma is proved when $x \nsim v$.
			
			\item[\textbf{Cases 3 and 4}:] If $y \nsim u$ or $y \nsim v$, the proof is similar to Cases 1 and 2.
		\end{itemize} 
		Thus, the lemma is proved.
	\end{proof}

	According to the decomposition \eqref{1_alpha_decomposition}, we can suppose that for any $\alpha, \beta \in I(P, \mathcal{R})$,
	\begin{align*}
		\alpha = \sum_{i \in \mathcal{I}} \alpha^i, \qquad
		\beta = \sum_{i \in \mathcal{I}} \beta^i,
	\end{align*}
	where $\alpha^i, \beta^i \in I(P^i, \mathcal{R})$. 
	By applying the decomposition of $P$, as defined in \eqref{1_decomposition}, and the aforementioned lemma, it is straightforward to conclude that $b(\alpha^{i_1}, \beta^{i_2}) = 0$ if $i_1 \neq i_2 \in \mathcal{I}$, thereby yielding the following result:
	\begin{equation}
		b(\alpha, \beta) = \sum_{i \in \mathcal{I}} b(\alpha^i, \beta^i). \label{eq:decomposition1} 
	\end{equation}

	Accordingly, this decomposition permits us to limit our analysis to the case where P is connected, as will be discussed subsequently.
	We will subsequently present the theorem, in the case where P is connected, which describes the conditions under which certain terms in equation \eqref{eq:corollary1} are equal to zero. Prior to this, we will present an instrumental lemma.

	\begin{lemma}\label{change_seat}
		For any $\alpha, \beta, \gamma, \delta \in I(P, \mathcal{R})$, we have 
		\begin{equation}
			b(\alpha, \beta) [\gamma, \delta] = [\alpha, \beta] b(\gamma, \delta). \label{eq:change_seat}
		\end{equation}
	\end{lemma}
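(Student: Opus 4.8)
The plan is to exploit the two Leibniz rules of a biderivation by evaluating the single quantity $b(\alpha\beta,\gamma\delta)$ in two different orders and comparing the outcomes. First I would expand in the first argument and then in the second. Applying $b(\alpha\beta,-)=\alpha b(\beta,-)+b(\alpha,-)\beta$, followed by $b(-,\gamma\delta)=b(-,\gamma)\delta+\gamma b(-,\delta)$ to each resulting summand, gives
\begin{equation*}
	b(\alpha\beta,\gamma\delta)=\alpha b(\beta,\gamma)\delta+\alpha\gamma b(\beta,\delta)+b(\alpha,\gamma)\delta\beta+\gamma b(\alpha,\delta)\beta.
\end{equation*}
Independently, expanding first in the second argument via $b(-,\gamma\delta)=b(-,\gamma)\delta+\gamma b(-,\delta)$ and then in the first argument via $b(\alpha\beta,-)=\alpha b(\beta,-)+b(\alpha,-)\beta$ produces
\begin{equation*}
	b(\alpha\beta,\gamma\delta)=\alpha b(\beta,\gamma)\delta+b(\alpha,\gamma)\beta\delta+\gamma\alpha b(\beta,\delta)+\gamma b(\alpha,\delta)\beta.
\end{equation*}

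The two expansions share the monomials $\alpha b(\beta,\gamma)\delta$ and $\gamma b(\alpha,\delta)\beta$. Cancelling these and collecting the surviving terms yields $\alpha\gamma\, b(\beta,\delta)+b(\alpha,\gamma)\,\delta\beta=b(\alpha,\gamma)\,\beta\delta+\gamma\alpha\, b(\beta,\delta)$. Rearranging so that like factors of $b(\beta,\delta)$ and $b(\alpha,\gamma)$ meet, namely grouping $\alpha\gamma\, b(\beta,\delta)-\gamma\alpha\, b(\beta,\delta)$ on one side and $b(\alpha,\gamma)\,\beta\delta-b(\alpha,\gamma)\,\delta\beta$ on the other, and factoring out the commutators, gives the auxiliary identity $[\alpha,\gamma]\,b(\beta,\delta)=b(\alpha,\gamma)\,[\beta,\delta]$. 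Since $\alpha,\beta,\gamma,\delta$ are arbitrary and independent, the desired conclusion $b(\alpha,\beta)[\gamma,\delta]=[\alpha,\beta]b(\gamma,\delta)$ follows at once by interchanging the roles of $\beta$ and $\gamma$ in this auxiliary identity.

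I expect no genuine conceptual obstacle here: the argument uses only associativity and the two biderivation rules, so it is purely formal and makes no appeal to the poset structure, to local finiteness, or even to the additivity of $b$. The only real care required is the bookkeeping of the eight noncommuting monomials and the verification that precisely two of them agree in both expansions. The one spot where a slip could occur is the orientation of the second-argument Leibniz rule $b(\alpha,\beta\gamma)=\beta b(\alpha,\gamma)+b(\alpha,\beta)\gamma$: one must keep the left factor $\gamma$ on the left and the right factor $\delta$ on the right throughout, since reversing that order would destroy the cancellation of the two shared terms. I would therefore double-check each application of this rule before equating the two expressions.
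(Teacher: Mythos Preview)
Your argument is correct and is essentially the same as the paper's: both expand a biderivation value on a pair of products in two different orders and compare. The only cosmetic difference is that the paper chooses the product $b(\alpha\gamma,\beta\delta)$, which yields $[\alpha,\beta]\,b(\gamma,\delta)=b(\alpha,\beta)\,[\gamma,\delta]$ directly without the final relabeling step you need after computing $b(\alpha\beta,\gamma\delta)$.
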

	\begin{proof}
		Let $\alpha, \beta, \gamma, \delta \in I(P, \mathcal{R})$ be arbitrary. Since $b$ is a biderivation, it satisfies the derivation property in each argument separately. We begin by applying the derivation property to the first argument of $b(\alpha \gamma, \beta \delta)$, followed by applying it to the second argument. This yields:
		\begin{equation}
			\begin{aligned}
				b(\alpha \gamma, \beta \delta) &= \alpha \cdot b(\gamma, \beta \delta) + b(\alpha, \beta \delta) \cdot \gamma \\
				&= \alpha \beta \cdot b(\gamma, \delta) + \alpha \cdot b(\gamma, \beta) \delta + \beta \cdot b(\alpha, \delta) \gamma + b(\alpha, \beta) \delta \gamma.
			\end{aligned} \label{eq:[]b_1}
		\end{equation}
		
		Conversely, we first apply the derivation property to the second argument and then to the first argument, obtaining:
		\begin{equation}
			\begin{aligned}
				b(\alpha \gamma, \beta \delta) &= \beta \cdot b(\alpha \gamma, \delta) + b(\alpha \gamma, \beta) \cdot \delta \\
				&= \beta \alpha \cdot b(\gamma, \delta) + \beta \cdot b(\alpha, \delta) \gamma + \alpha \cdot b(\gamma, \beta) \delta + b(\alpha, \beta) \gamma \delta. 
			\end{aligned} \label{eq:[]b_2}
		\end{equation}
		
		Subtracting equation \eqref{eq:[]b_1} from equation \eqref{eq:[]b_2} gives:
		\begin{equation}
			0 = [\alpha, \beta] \cdot b(\gamma, \delta) - b(\alpha, \beta) \cdot [\gamma, \delta],
		\end{equation}
		which rearranges to the desired identity:
		\begin{equation}
			b(\alpha, \beta) [\gamma, \delta] = [\alpha, \beta] b(\gamma, \delta).
		\end{equation}
	The proof is completed.
	\end{proof}

	In accordance with the aforementioned lemma, it is possible to select specific values for $x,y,u,v$ in order to demonstrate that $b(e_{xy},e_{uv}) = 0$.
	For $x, y, u, v \in P$ that are comparable with each other and have $[e_{xy}, e_{uv}]=0$, the following two subcases can be identified:(1) $x=y=u=b$;(2)$x\neq v, y \neq u$. 
	The following two theorems will address these subcases in greater detail.

	\begin{theorem}\label{b(e_{xx},e_{xx})}
		For any $x\in P$, we have
		\begin{equation}
			b(e_{xx}, e_{xx}) = 
			\begin{cases}
				\sum_{\substack{x < y,\\ y\text{ is max}}}b_{xy}(e_{xx}, e_{xx})e_{xx}, &$x$ \text{ is a minimal element;} \\
				\sum_{\substack{y < x,\\ y\text{ is min}}}b_{xy}(e_{xx}, e_{xx})e_{xx}, &$x$ \text{ is a maximal element;} \\
				0, &\text{otherwise}.
			\end{cases}
		\end{equation}
	\end{theorem}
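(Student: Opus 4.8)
The plan is to pin down the single element $W := b(e_{xx}, e_{xx})$ directly, combining the support restriction coming from Corollary~\ref{corollary1} with the commutation identity of Lemma~\ref{change_seat}. First I would record where $W$ can possibly be supported: applying Corollary~\ref{corollary1} to $b(e_{xx},e_{xx})$ (its fourth case, with all four indices equal to $x$) shows that $W$ lives entirely on the $x$-row and $x$-column, namely $W = \sum_{p\le x} b_{px}(e_{xx},e_{xx})\,e_{px} + \sum_{x<q} b_{xq}(e_{xx},e_{xx})\,e_{xq}$. Next I would kill the diagonal coefficient: from $e_{xx}=e_{xx}e_{xx}$ and the derivation property in the first slot, $W = e_{xx}W + We_{xx}$, and comparing the $(x,x)$-entries gives $b_{xx}(e_{xx},e_{xx}) = 2\,b_{xx}(e_{xx},e_{xx})$, hence $b_{xx}(e_{xx},e_{xx})=0$. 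Thus $W$ is off-diagonal, supported only on the positions $(p,x)$ with $p<x$ and $(x,q)$ with $q>x$.

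The engine of the argument is Lemma~\ref{change_seat}. Putting $\alpha=\beta=e_{xx}$ yields $W[\gamma,\delta]=0$ for all $\gamma,\delta$, and putting $\gamma=\delta=e_{xx}$ yields $[\alpha,\beta]W=0$ for all $\alpha,\beta$. Since every off-diagonal matrix unit is a commutator, $e_{ac}=[e_{aa},e_{ac}]$ for $a<c$, I obtain the two families of relations $We_{ac}=0$ and $e_{ac}W=0$ for all $a<c$. The rest of the proof is then a matter of feeding well-chosen $a,c$ into these relations and reading off the vanishing of coefficients via linear independence of matrix units.

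Concretely: if $x$ is not maximal, fix any $q_0>x$; then $We_{xq_0} = \sum_{p<x} b_{px}(e_{xx},e_{xx})\,e_{pq_0}=0$ forces the entire $x$-column part of $W$ to vanish. Dually, if $x$ is not minimal, fix any $p_0<x$; then $e_{p_0x}W = \sum_{x<q} b_{xq}(e_{xx},e_{xx})\,e_{p_0q}=0$ forces the entire $x$-row part to vanish. Hence when $x$ is neither maximal nor minimal both parts die and $W=0$, giving the third branch. When $x$ is minimal the column part is already vacuous, and I trim the surviving row part by using, for each non-maximal $q>x$, an element $c>q$: the relation $We_{qc}=b_{xq}(e_{xx},e_{xx})\,e_{xc}=0$ eliminates $b_{xq}(e_{xx},e_{xx})$ unless $q$ is maximal, leaving exactly the first branch. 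The maximal case is completely symmetric, trimming the column part with $e_{cp}W$ for $c<p$ to retain only minimal $p$.

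The only delicate point — the part deserving the most care — is the bookkeeping of which matrix unit to multiply by in each case so that precisely the unwanted coefficient is isolated, together with checking that each auxiliary unit ($e_{xq_0}$, $e_{p_0x}$, $e_{qc}$, $e_{cp}$) is genuinely a commutator and that the resulting basis elements (e.g. the $e_{pq_0}$, or $e_{xc}$) are nonzero and pairwise distinct so that linear independence applies; here local finiteness and transitivity ($p<x<q_0 \Rightarrow p<q_0$, and so on) guarantee the needed relations. I do not expect any genuinely hard step beyond this organizational care, since Lemma~\ref{change_seat} already does the heavy lifting of converting the idempotency of $e_{xx}$ into a clean two-sided annihilation condition.
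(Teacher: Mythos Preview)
Your proposal is correct and follows essentially the same approach as the paper: both use Corollary~\ref{corollary1} to confine $b(e_{xx},e_{xx})$ to the $x$-row and $x$-column, and both exploit Lemma~\ref{change_seat} together with $[e_{xx},e_{xx}]=0$ to annihilate the unwanted coefficients by multiplying against suitable matrix units. Your exposition is slightly more systematic in first packaging the conclusion of Lemma~\ref{change_seat} as the two blanket relations $We_{ac}=0$ and $e_{ac}W=0$ (and in handling the diagonal entry $b_{xx}$ separately via $W=e_{xx}W+We_{xx}$), whereas the paper instantiates the commutator identity case by case; but these are organizational rather than mathematical differences.
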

	\begin{proof} We prove the lemma by considering three cases based on the position of $x$ in $P$: minimal, maximal, and neither.
		
		Case 1: $x$ is a minimal element in $P$.
		By Corollary \ref{corollary1}, we have: 
		\begin{equation} 
			b(e_{xx}, e_{xx}) = \sum_{x \leq q}b_{xq}(e_{xx}, e_{xx})e_{xq}. 
		\end{equation} 
		For any pair $x \leq y$ where $y$ is not a maximal element, let $z > y$. 
		By Lemma \ref{change_seat}, we know: 
		\begin{equation} 
					b_{xy}(e_{xx}, e_{xx})e_{xz} = b(e_{xx}, e_{xx})[e_{yz}, e_{zz}] = [e_{xx}, e_{xx}]b(e_{yz}, e_{zz}) = 0. 
		\end{equation} 
		Thus, when $x$ is a minimal element, $b(e_{xx}, e_{xx})$ reduces to the given form $\sum_{x < y, y\text{ is max}}b_{xy}(e_{xx}, e_{xx})e_{xx}$.
		
		Case 2: $x$ is a maximal element in $P$. The procedure is similar to Case 1.
		
		Case 3: $x$ is neither a minimal nor maximal element.
		In this case, there exist $y, z$ such that $y < x < z$. For any $p \leq x$, we have: \begin{equation} 
			e_{pp}b(e_{xx}, e_{xx})e_{xz} = e_{pp}b(e_{xx}, e_{xx})[e_{xz}, e_{zz}] = e_{pp}[e_{xx}, e_{xx}]b(e_{xz}, e_{zz}) = 0.
		\end{equation} 
		This implies $b_{px}(e_{xx}, e_{xx}) = 0$. 
		Similarly, for any $x \leq q$, we can deduce $b_{xq}(e_{xx}, e_{xx}) = 0$. 
		Hence, when $x$ is neither minimal nor maximal element, we have $ b(e_{xx}, e_{xx}) = 0$.
	
		Combining the three cases, the lemma is proven. 
	\end{proof}

	\begin{theorem}\label{[e_xy,e_uv]=0}
		Let $P$ be a connected poset such that any maximal chain has at least three elements. For any $r_1, r_2 \in \mathcal{R}$ and $x, y, u, v \in P$ that are comparable with each other and have $x \neq v, y \neq u$.
		The additive biderivation $b$ satisfies $b(r_1 e_{xy}, r_2 e_{uv}) = 0$, except in the case where $x = y \neq u = v$ and one of $x$ and $u$ is the maximal element in $P$ and the other is the minimal element.
	\end{theorem}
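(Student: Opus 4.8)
The plan is to combine Lemma~\ref{change_seat} with the vanishing of the commutator $[r_1 e_{xy}, r_2 e_{uv}]$. Because $x\le y$, $u\le v$ and the hypotheses force $y\neq u$ and $x\neq v$, both products $e_{xy}e_{uv}$ and $e_{uv}e_{xy}$ vanish, so $[r_1e_{xy},r_2e_{uv}]=0$. Feeding $(\alpha,\beta)=(r_1e_{xy},r_2e_{uv})$ into Lemma~\ref{change_seat} gives $b(r_1e_{xy},r_2e_{uv})\,[\gamma,\delta]=0$ for all $\gamma,\delta$, and feeding $(\gamma,\delta)=(r_1e_{xy},r_2e_{uv})$ gives $[\alpha,\beta]\,b(r_1e_{xy},r_2e_{uv})=0$ for all $\alpha,\beta$. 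Writing $B=b(r_1e_{xy},r_2e_{uv})$, this says that $B$ is annihilated on both sides by every Lie commutator in $I(P,\mathcal{R})$.

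Next I would expand $B$ through Corollary~\ref{corollary1} as a finite sum $\sum_{a\le b}c_{ab}e_{ab}$ and eliminate its terms using these two-sided annihilations. If a term $c_{ab}e_{ab}$ has a column index $b$ that is not maximal, I pick $w>b$ and multiply on the right by $[e_{bw},e_{ww}]=e_{bw}$; since $e_{a'b'}e_{bw}=0$ unless $b'=b$, this isolates $\sum_a c_{ab}e_{aw}=0$ and forces $c_{ab}=0$. Dually, if the row index $a$ is not minimal, I pick $p<a$ and multiply on the left by $[e_{pp},e_{pa}]=e_{pa}$ to obtain $c_{ab}=0$. After this step the only terms that can persist are those $c_{ab}e_{ab}$ whose row $a$ is a minimal element and whose column $b$ is a maximal element of $P$.

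The third step removes these survivors whenever at least one argument is off-diagonal. If $x<y$, then $\alpha\mapsto b(\alpha,r_2e_{uv})$ is a derivation, and Lemma~\ref{d_structure_like_linear} rewrites a surviving entry---whose column exceeds $y$, or whose row lies below $x$---as $r_1$ times an entry of $b(e_{yy},r_2e_{uv})$ or $b(e_{xx},r_2e_{uv})$; the minimality of the survivor's row forces the new diagonal index to sit strictly inside the chain through $x<y$, so it is non-extremal, and the previous step (whose commutator hypothesis persists since $y\neq u$, $y\neq v$) makes the entry vanish. The analogous reduction on the second slot applies when $u<v$. The one entry not reached this way is the ``doubly diagonal'' coefficient $c_{xy}$ in the self-paired case $e_{uv}=e_{xy}$ with $x$ minimal and $y$ maximal; here I would choose an intermediate element $z$ with $x<z<y$, which exists precisely because every maximal chain has at least three elements, factor $e_{xy}=e_{xz}e_{zy}$, and apply Lemma~\ref{d_xz=d_xy+d_yz} to express $c_{xy}$ as a combination of entries of $b(e_{xz},r_2e_{xy})$ and $b(e_{zy},r_2e_{xy})$ that lie off the support permitted by Corollary~\ref{corollary1}, hence are zero.

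Finally, the only configuration admitting no such reduction is $x=y$ and $u=v$ with $x\neq u$, where both arguments are diagonal; comparability gives, say, $x<u$, Corollary~\ref{corollary1} reduces $B$ to $c_{xu}e_{xu}$, and the second step shows this survives exactly when $x$ is minimal and $u$ is maximal, which is the stated exception. I expect the main obstacle to be the case bookkeeping: one must verify that within each branch of Corollary~\ref{corollary1} the surviving support terms have distinct rows and columns so that the isolation arguments are clean, and that the minimality or maximality of a survivor always forces the reduction of Lemma~\ref{d_structure_like_linear} to land on a strictly interior diagonal index---both points relying on the assumption that maximal chains have at least three elements to supply the needed room below maximal and above minimal elements.
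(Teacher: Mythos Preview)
Your approach via Lemma~\ref{change_seat} is genuinely different from the paper's case-by-case analysis and is, in outline, more economical: by first showing that $B=b(r_1e_{xy},r_2e_{uv})$ is annihilated on both sides by every commutator, you immediately reduce the support of $B$ to pairs $(a,b)$ with $a$ minimal and $b$ maximal, which the paper only obtains piecemeal (cf.\ Theorem~\ref{b(e_{xx},e_{xx})} and the subcase arguments in Theorem~\ref{[e_xy,e_uv]=0}). The final treatment of the diagonal entry $c_{xy}$ is also clean: once $x$ is minimal and $y$ is maximal, the three-element hypothesis really does give an interior $z$ with $x<z<y$, and your use of Lemma~\ref{d_xz=d_xy+d_yz} together with Corollary~\ref{corollary1} kills it.

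There is, however, a genuine gap in your third step in the self-paired case $u=x$, $v=y$ (with $x<y$). After reducing a surviving entry $c_{py}$ (with $p<x$) via Lemma~\ref{d_structure_like_linear} to $r_1\,b_{px}(e_{xx},r_2e_{xy})$, you invoke ``the previous step (whose commutator hypothesis persists since $y\neq u$, $y\neq v$)''. But here $v=y$, so $[e_{yy},e_{uv}]=-e_{xy}\neq 0$; likewise $u=x$ gives $[e_{xx},e_{uv}]=e_{xy}\neq 0$. Neither reduced pair $(e_{xx},e_{xy})$ nor $(e_{yy},e_{xy})$ has vanishing commutator, so your step~2 annihilation argument does not apply to $b(e_{xx},r_2e_{xy})$ or $b(e_{yy},r_2e_{xy})$, and the stated reason for $c_{py}=0$ (and dually $c_{xq}=0$ for $q>y$) fails. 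The conclusion is nonetheless true and the fix is immediate: after the Lemma~\ref{d_structure_like_linear} reduction, the entry $b_{px}(e_{xx},r_2e_{xy})$ has column $x<y$, which lies outside the support of $b(e_{xx},r_2e_{xy})$ given by Corollary~\ref{corollary1} (the $x=u$, $y\neq v$ case forces columns $\ge y$), so it vanishes by support considerations rather than by commutator annihilation. You should replace the appeal to step~2 here by this direct support check. (A minor side remark: $B$ need not be a \emph{finite} sum in the $x=u$ or $y=v$ cases, but your isolation arguments work entry by entry regardless.)
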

	\begin{proof}
		Let $r_1, r_2 \in \mathcal{R}$ and $x, y, u, v \in P$ satisfy the conditions of the theorem.
		We further divide this case into four subcases:
	
		\begin{enumerate}
			\item[\textbf{Case 1}] If $x \neq u$ and $y \neq v$, except in the case where $x = y \neq u = v$ and one of $x$ and $u$ is the maximal element in $P$ and the other is the minimal element. From Corollary \ref{corollary1}, we have
			\begin{equation}
				b(r_1 e_{xy}, r_2 e_{uv}) = b_{xv}(r_1 e_{xy}, r_2 e_{uv}) e_{xv} + b_{uy}(r_1 e_{xy}, r_2 e_{uv}) e_{uy}.
			\end{equation}
			We have $b_{xv}(r_1e_{xy},r_2e_{uv})=b_{yv}(r_1e_{yy},r_2e_{uv})$ by using Lemma \ref{d_structure_like_linear} in its first argument when $x\leq y<v$.
			If $v<y$, it is also correct because $b_{xv}(r_1e_{xy},r_2e_{uv})=b_{yv}(r_1e_{yy},r_2e_{uv})=0$ by corollary \ref{corollary1}.
			By the same reasoning in its second argument, we obtain
			$b_{yv}(r_1e_{yy},r_2e_{uv}) = b_{yu}(r_1e_{yy},r_2e_{uu})$. 
			Without loss of generality, we can assume that $y < u$.
			It is evidence that $y$ is not minimal or $u$ is not maximal, otherwise the case is excluded.
			Thus by Lemma \ref{d_xy(e_xx)+d_xy(e_yy)=0} and Theorem \ref{b(e_{xx},e_{xx})}, we have
			\begin{equation}
				b_{yu}(r_1e_{yy},r_2e_{uu}) = -r_1r_2b_{yu}(e_{yy},e_{yy}) = 0			\end{equation}
			Similarly, $b_{uy}(r_1e_{xy},r_2e_{uv})$ is also equal to 0.
			Therefore, we have $b(r_1e_{xy},r_2e_{uv})=0$ in this case. 
			
			\item[\textbf{Case 2}] If $x = u$ and $y \neq v$. We consider two subcases: $v < y$ and $y < v$.
			\begin{itemize}
				\item[(a)] If $v < y$, we have
				\begin{equation}
					b(r_1 e_{xy}, r_2 e_{xv}) = \sum_{v < y \leq q} b_{xq}(r_1 e_{xy}, r_2 e_{xv}) e_{xq} \overset{\text{Lemma }\ref{d_structure_like_linear}}{=} \sum_{v < y \leq q} b_{vq}(r_1 e_{xy}, r_2 e_{vv}) e_{xq}.
				\end{equation}
				Since $x = u < v$ and $q \geq y$, $b_{vq}(r_1 e_{xy}, r_2 e_{vv}) = 0$ by Lemma \ref{d_structure} and Corollary \ref{d_uv(e_xy)=0}. Therefore, $b(r_1 e_{xy}, r_2 e_{xv}) = 0$.
				
				\item[(b)] If $y < v$, the proof is similar.
			\end{itemize}
			
			\item[\textbf{Case 3}] If $x \neq u$ and $y = v$. The proof is similar to subcase 2.
			
			\item[\textbf{Case 4}] If $x = u$ and $y = v$, implying $x = u < y = v$. By Corollary \ref{corollary1}, we have
			\begin{equation}
				b(r_1 e_{xy}, r_2 e_{xy}) = \sum_{p < x} b_{py}(r_1 e_{xy}, r_2 e_{xy}) e_{py} + \sum_{y < q} b_{xq}(r_1 e_{xy}, r_2 e_{xy}) e_{xq} + b_{xy}(r_1 e_{xy}, r_2 e_{xy}) e_{xy}. \label{b_xy(e_xy,e_xy)}
			\end{equation}
			For any pair $p < x$, the term $b_{py}(r_1e_{xy},r_2e_{xy})e_{py}$ is equal to $b_{px}(r_1e_{xx},r_2e_{xy})e_{py}$ by Lemma \ref{d_structure_like_linear}. 
			Let us consider the second argument of it, because $p\neq x$ and $x\neq y$, its is equal to 0 by Corollary \ref{d_uv(e_xy)=0}. By the same way, for any $y < q$, we have $b_{xq}(r_1e_{xy},r_2e_{xy})e_{xq}=0$. 
			Thus,
			\begin{equation}
				b(r_1 e_{xy}, r_2 e_{xy}) = b_{xy}(r_1 e_{xy}, r_2 e_{xy}) e_{xy}.
			\end{equation}
			By the assumption on $P$, there exists an element $z \neq x, y$ such that $z$ is comparable with $x$ and $y$. We consider three cases:
			\begin{enumerate}
				\item If $z < x < y$, applying Lemma \ref{change_seat}, we obtain
				\begin{equation}
					b(e_{zx}, e_{xx}) [r_1 e_{xy}, r_2 e_{xy}] = [e_{zx}, e_{xx}] b(r_1 e_{xy}, r_2 e_{xy}).
				\end{equation}
				The left-hand side is zero, and the right-hand side equals $b_{xy}(r_1 e_{xy}, r_2 e_{xy}) e_{zy}$, implying $b(r_1 e_{xy}, r_2 e_{xy}) = 0$.
				
				\item If $x < y < z$, the proof is similar to (a).
				
				\item If $x < z < y$, applying Lemma \ref{d_xz=d_xy+d_yz}, we have
				\begin{equation}
					b_{xy}(r_1 e_{xy}, r_2 e_{xy}) = b_{xz}(r_1 e_{xz}, r_2 e_{xy}) + b_{zy}(r_1 e_{zy}, r_2 e_{xy}) \overset{\text{Lemma }\ref{d_structure}}{=} 0,
				\end{equation}
				Consider the second argument of $b_{xz}(r_1e_{xz},r_2e_{xy})$ and $b_{zy}(r_1e_{zy},r_2e_{xy})$ respectively Because $x<z<y$, we have $b_{xz}(r_1e_{xz},r_2e_{xy}) = b_{zy}(r_1e_{zy},r_2e_{xy}) = 0$ by Corollary \ref{d_uv(e_xy)=0}.
				This implies $b(r_1e_{xy},r_2e_{xy})=0$.
			\end{enumerate}

		\end{enumerate}
		Considering all the above cases, the theorem is proved.
	\end{proof}

	The objective of the forthcoming project is to provide evidence that certain components of equation \eqref{eq:corollary1} are equal. In particular, it will be demonstrated that $b_{xy}(e_{xy}, e_{xx}) = b_{uv}(e_{uv}, e_{uu})$ when $x, y, u, v \in P$ satisfy specified conditions.
	Prior to this, two lemmas will be proven.

	\begin{lemma}\label{equal_1}
		For any $x < y < z \in P$, we have
		\begin{equation}
			b_{xy}(e_{xy}, e_{yy}) = b_{yz}(e_{yy}, e_{yz}) \quad \text{and} \quad b_{xy}(e_{yy}, e_{xy}) = b_{yz}(e_{yz}, e_{yy}).
		\end{equation}
	\end{lemma}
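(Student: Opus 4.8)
The key tool is Lemma \ref{change_seat}, which trades one argument of $b$ against a Lie bracket via $b(\alpha,\beta)[\gamma,\delta]=[\alpha,\beta]b(\gamma,\delta)$. The plan is to feed it auxiliary matrix units chosen so that the two brackets collapse to the single units $e_{xy}$ and $e_{yz}$; this turns the abstract identity of Lemma \ref{change_seat} into a concrete equation of products whose $(x,z)$-entry can be read off directly, and that entry will be exactly one of the four scalars in the statement.

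Concretely, for the first identity I would apply Lemma \ref{change_seat} with $\alpha=e_{xy}$, $\beta=e_{yy}$, $\gamma=e_{yy}$, $\delta=e_{yz}$. Since $x<y<z$, the multiplication rule gives $[e_{xy},e_{yy}]=e_{xy}$ and $[e_{yy},e_{yz}]=e_{yz}$, so the lemma reduces to
\[ b(e_{xy},e_{yy})\,e_{yz}=e_{xy}\,b(e_{yy},e_{yz}). \]
Writing $f=b(e_{xy},e_{yy})$ and $g=b(e_{yy},e_{yz})$, I would then extract the $(x,z)$-entry of each side (equivalently, multiply by $e_{xx}$ on the left and $e_{zz}$ on the right and invoke \eqref{eq:e_{xx}fe_{yy}=f_{xy}e_{xy}}). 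The multiplication rule gives $(f\,e_{yz})_{xz}=f_{xy}=b_{xy}(e_{xy},e_{yy})$ on the left and $(e_{xy}\,g)_{xz}=g_{yz}=b_{yz}(e_{yy},e_{yz})$ on the right, and equating them yields the first identity.

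The second identity comes out the same way after swapping the roles of the two arguments: applying Lemma \ref{change_seat} with $\alpha=e_{yy}$, $\beta=e_{xy}$, $\gamma=e_{yz}$, $\delta=e_{yy}$ produces $[e_{yy},e_{xy}]=-e_{xy}$ and $[e_{yz},e_{yy}]=-e_{yz}$, whose two signs cancel, so one again lands on $b(e_{yy},e_{xy})\,e_{yz}=e_{xy}\,b(e_{yz},e_{yy})$. Reading off the $(x,z)$-entry exactly as above gives $b_{xy}(e_{yy},e_{xy})=b_{yz}(e_{yz},e_{yy})$.

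I do not expect a genuine obstacle here; the only point that requires care is the choice of the auxiliary pair $(\gamma,\delta)$ so that both brackets reduce to $e_{xy}$ and $e_{yz}$, after which the one-sided products telescope and each $(x,z)$-entry is precisely the intended scalar. Because every bracket computation uses only $x<y<z$ together with the rule that $e_{ab}e_{cd}$ equals $e_{ad}$ when $b=c$ and $0$ otherwise, the argument is valid over an arbitrary commutative ring $\mathcal{R}$.
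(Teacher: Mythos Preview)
Your argument is correct, but it takes a different route from the paper. The paper does not invoke Lemma~\ref{change_seat} here at all; instead it introduces the bridging scalar $b_{xz}(e_{xy},e_{yz})$ and applies the one-variable derivation identity of Lemma~\ref{d_structure_like_linear} separately in each slot. Fixing the second argument, part~(2) of that lemma gives $b_{xz}(e_{xy},e_{yz})=b_{yz}(e_{yy},e_{yz})$; fixing the first argument, part~(1) gives $b_{xz}(e_{xy},e_{yz})=b_{xy}(e_{xy},e_{yy})$, and equating the two yields the first identity (the second is obtained symmetrically from $b_{xz}(e_{yz},e_{xy})$).

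Both arguments are short and valid over any commutative $\mathcal{R}$. The paper's route has the mild advantage of exhibiting the common value $b_{xz}(e_{xy},e_{yz})$ explicitly, which is in the spirit of the surrounding lemmas (Lemmas~\ref{d_structure_like_linear}--\ref{d_xz=d_xy+d_yz}) and uses only the derivation property in a single slot. Your route via Lemma~\ref{change_seat} is equally clean and arguably more direct, since a single application of the biderivation identity plus one $(x,z)$-entry extraction settles each equation without passing through an intermediate quantity; the only cost is that it relies on the genuinely two-variable Lemma~\ref{change_seat} rather than the single-slot Lemma~\ref{d_structure_like_linear}.
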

	\begin{proof}
		For any $x < y < z \in P$, applying Lemma \ref{d_structure_like_linear} to the left or right argument of $b_{xz}(e_{xy}, e_{yz})$ separately, we have
		\begin{equation}
			b_{yz}(e_{yy}, e_{yz}) = b_{xz}(e_{xy}, e_{yz}) = b_{xy}(e_{xy}, e_{yy})
		\end{equation}	                       
		Thus, we have $b_{xy}(e_{xy}, e_{yy}) = b_{yz}(e_{yy}, e_{yz})$. Using a similar process for $b_{xz}(e_{yz}, e_{xy})$, we obtain the other equality.
	\end{proof}
	
	\begin{lemma}\label{equal_2}
		For any $x < y < z \in P$, we have
		\begin{enumerate}
			\item[(1)] $b_{xy}(e_{xy}, e_{xx}) = b_{yz}(e_{yz}, e_{yy}) = b_{xz}(e_{xz}, e_{xx})$; 
			\item[(2)] $b_{xy}(e_{xx}, e_{xy}) = b_{yz}(e_{yy}, e_{yz}) = b_{xz}(e_{xx}, e_{xz})$. 
		\end{enumerate} 
	\end{lemma}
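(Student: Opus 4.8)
The plan is to prove each chain of equalities in two steps, distinguishing the two mechanisms at play. The equalities whose two sides share the \emph{same} fixed diagonal argument follow at once from the derivation structure developed in Section 3, whereas the genuinely ``cross'' equalities, where the fixed diagonal argument jumps from $e_{xx}$ to $e_{yy}$, are extracted from the interchange identity of Lemma \ref{change_seat}.

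First I would dispose of $b_{xz}(e_{xz}, e_{xx}) = b_{xy}(e_{xy}, e_{xx})$ in part (1). Fixing the second argument at $e_{xx}$, the map $\alpha \mapsto b(\alpha, e_{xx})$ is a derivation, so Lemma \ref{d_xz=d_xy+d_yz} yields $b_{xz}(e_{xz}, e_{xx}) = b_{xy}(e_{xy}, e_{xx}) + b_{yz}(e_{yz}, e_{xx})$. It then remains to kill the last summand: viewing $\beta \mapsto b(e_{yz}, \beta)$ as a derivation and applying Corollary \ref{d_uv(e_xy)=0} to $b_{yz}(e_{yz}, e_{xx})$ (here the fixed index pair is $(x,x)$ while the output pair $(y,z)$ satisfies neither $y=x$ nor $z=x$, since $x<y<z$) gives $b_{yz}(e_{yz}, e_{xx}) = 0$. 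The analogous equality $b_{xz}(e_{xx}, e_{xz}) = b_{xy}(e_{xx}, e_{xy})$ in part (2) is obtained symmetrically, now fixing the \emph{first} argument at $e_{xx}$, applying Lemma \ref{d_xz=d_xy+d_yz} in the second variable, and using $b_{yz}(e_{xx}, e_{yz}) = 0$ (again by Corollary \ref{d_uv(e_xy)=0}).

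The crucial step is the cross equality $b_{xy}(e_{xy}, e_{xx}) = b_{yz}(e_{yz}, e_{yy})$. Here I would invoke Lemma \ref{change_seat} with $\alpha = e_{xy}$, $\beta = e_{xx}$, $\gamma = e_{yz}$, $\delta = e_{yy}$. A direct computation of the commutators gives $[e_{xy}, e_{xx}] = -e_{xy}$ and $[e_{yz}, e_{yy}] = -e_{yz}$, so the identity $b(\alpha,\beta)[\gamma,\delta] = [\alpha,\beta]\,b(\gamma,\delta)$ collapses, after cancelling the two minus signs, to $b(e_{xy}, e_{xx})\,e_{yz} = e_{xy}\,b(e_{yz}, e_{yy})$. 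Reading off the $(x,z)$-entry of this matrix identity, and using that $e_{yz}$ has its only nonzero value at $(y,z)$ while $e_{xy}$ has its only nonzero value at $(x,y)$, isolates exactly $b_{xy}(e_{xy}, e_{xx})$ on the left and $b_{yz}(e_{yz}, e_{yy})$ on the right, which is the claim. Part (2) is handled by the mirror choice $\alpha = e_{xx}$, $\beta = e_{xy}$, $\gamma = e_{yy}$, $\delta = e_{yz}$, for which $[e_{xx}, e_{xy}] = e_{xy}$ and $[e_{yy}, e_{yz}] = e_{yz}$, giving $b(e_{xx}, e_{xy})\,e_{yz} = e_{xy}\,b(e_{yy}, e_{yz})$ and hence $b_{xy}(e_{xx}, e_{xy}) = b_{yz}(e_{yy}, e_{yz})$ upon extracting the $(x,z)$-entry.

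I expect the main obstacle to be precisely this cross equality: the key insight is to feed Lemma \ref{change_seat} the \emph{mixed} pairs $(e_{xy}, e_{xx})$ and $(e_{yz}, e_{yy})$, whose commutators happen to be scalar multiples of single matrix units, so that the interchange identity reduces to a clean product relation. Once it is reduced to $b(e_{xy}, e_{xx})\,e_{yz} = e_{xy}\,b(e_{yz}, e_{yy})$, extracting the $(x,z)$-component is entirely routine. By contrast, the same-argument equalities are immediate from the Section 3 machinery, so the whole lemma reduces to one well-chosen application of the interchange identity per part, combined with a single vanishing from Corollary \ref{d_uv(e_xy)=0}.
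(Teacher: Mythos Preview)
Your proof is correct, but the route you take for the ``cross'' equality $b_{xy}(e_{xy},e_{xx})=b_{yz}(e_{yz},e_{yy})$ differs from the paper's. The paper stays entirely within the Section~3 derivation machinery: it applies Lemma~\ref{d_xz=d_xy+d_yz} twice (once with second argument $e_{xx}$, once with $e_{zz}$) to get $b_{xz}(e_{xz},e_{xx})=b_{xy}(e_{xy},e_{xx})$ and $b_{xz}(e_{xz},e_{zz})=b_{yz}(e_{yz},e_{zz})$, and then links these via the sign-flip Lemma~\ref{d_xy(e_xx)+d_xy(e_yy)=0} applied at both $(x,z)$ and $(y,z)$. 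You instead handle the same-argument equality as the paper does, but obtain the cross equality in one stroke from the interchange identity of Lemma~\ref{change_seat}, exploiting that $[e_{xy},e_{xx}]=-e_{xy}$ and $[e_{yz},e_{yy}]=-e_{yz}$ are single matrix units. Your argument is arguably slicker for this step and makes genuine use of the \emph{bi}derivation structure, whereas the paper's argument treats $b$ only one slot at a time and never invokes Lemma~\ref{change_seat} here; on the other hand, the paper's approach keeps Lemma~\ref{equal_2} logically independent of Lemma~\ref{change_seat} and grounded solely in the elementary Section~3 identities.
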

	\begin{proof}
		Let $x < y < z \in P$. For $b_{xz}(e_{xz}, e_{xx})$ and $b_{xz}(e_{xz}, e_{zz})$, using Lemma \ref{d_xz=d_xy+d_yz} for its first argument, we obtain
		\begin{equation}
			\begin{cases}
				b_{xz}(e_{xz}, e_{xx}) = b_{xy}(e_{xy}, e_{xx}) + b_{yz}(e_{yz}, e_{xx}) = b_{xy}(e_{xy}, e_{xx}); \\
				b_{xz}(e_{xz}, e_{zz}) = b_{xy}(e_{xy}, e_{zz}) + b_{yz}(e_{yz}, e_{zz}) = b_{yz}(e_{yz}, e_{zz})
			\end{cases} \label{eq:equal_2_3}
		\end{equation}
		noting that $b_{yz}(e_{yz}, e_{xx})=b_{xy}(e_{xy}, e_{zz})=0$ by corollary \ref{corollary1}.		Additionally, we have $b_{xz}(e_{xz}, e_{xx}) = -b_{xz}(e_{xz}, e_{zz})$ by Lemma \ref{d_xy(e_xx)+d_xy(e_yy)=0}. Plugging the results from \eqref{eq:equal_2_3} into this, we find that
		\begin{equation}
			b_{xy}(e_{xy}, e_{xx}) = -b_{yz}(e_{yz}, e_{zz}),
		\end{equation}
		which implies $b_{xy}(e_{xy}, e_{xx}) = b_{yz}(e_{yz}, e_{yy})$. Then $b_{xy}(e_{xy}, e_{xx}) = b_{yz}(e_{yz}, e_{yy}) = b_{xz}(e_{xz}, e_{xx})$. Similarly, we can obtain the other equality.
	\end{proof}
	
	\begin{corollary}
		For any $x < y \in P$, we have $b_{xy}(e_{xy}, e_{xx}) = -b_{xy}(e_{xx}, e_{xy})$.
	\end{corollary}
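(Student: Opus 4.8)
The plan is to play the opposite-sign brackets $[e_{xy},e_{xx}]=-e_{xy}$ and $[e_{xx},e_{xy}]=e_{xy}$ against each other through Lemma~\ref{change_seat}. Set $B=b(e_{xy},e_{xx})+b(e_{xx},e_{xy})$; the assertion is exactly that $B_{xy}=0$. Since $y\neq x$ we have $e_{xy}e_{xx}=0$ and $e_{xx}e_{xy}=e_{xy}$, so indeed $[e_{xy},e_{xx}]=-e_{xy}$ and $[e_{xx},e_{xy}]=e_{xy}$. Putting the pairs $(e_{xy},e_{xx})$ and $(e_{xx},e_{xy})$ into the left slot of Lemma~\ref{change_seat} gives $b(e_{xy},e_{xx})[\gamma,\delta]=-e_{xy}\,b(\gamma,\delta)$ and $b(e_{xx},e_{xy})[\gamma,\delta]=e_{xy}\,b(\gamma,\delta)$; adding, the two right-hand sides cancel and we obtain $B[\gamma,\delta]=0$ for all $\gamma,\delta\in I(P,\mathcal{R})$. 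Placing the same two pairs in the right slot of Lemma~\ref{change_seat} yields, in the same way, $[\gamma,\delta]\,B=0$ for all $\gamma,\delta$.

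With these two annihilation identities in hand, the next step is to project $B$ onto its $(x,y)$-entry by choosing a commutator that translates the $y$-row or the $x$-column of $B$ into view. If $y$ is not maximal, choose $z>y$ and observe that $e_{yz}=[e_{yz},e_{zz}]$, so that $B\,e_{yz}=0$; reading off the $(x,z)$-entry (only the term with middle index $y$ survives) gives $B_{xy}=0$. Dually, if $x$ is not minimal, choose $w<x$ and use $e_{wx}=[e_{wx},e_{xx}]$ together with $[\gamma,\delta]\,B=0$ to get $e_{wx}\,B=0$, whose $(w,y)$-entry equals $B_{xy}$; again $B_{xy}=0$. In either case the corollary follows at once.

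The only configuration these choices miss is when $x$ is minimal and $y$ is maximal in $P$, since then there is neither a strict upper bound of $y$ nor a strict lower bound of $x$ with which to build the projecting commutator. This is exactly where the standing hypothesis that every maximal chain has at least three elements enters: it furnishes an intermediate element $w$ with $x<w<y$. As $w<y$, $w$ is not maximal, so the previous paragraph applies to the pair $(x,w)$ and gives $b_{xw}(e_{xw},e_{xx})+b_{xw}(e_{xx},e_{xw})=0$. Applying Lemma~\ref{equal_2} to the chain $x<w<y$ then identifies $b_{xw}(e_{xw},e_{xx})=b_{xy}(e_{xy},e_{xx})$ by part~(1) and $b_{xw}(e_{xx},e_{xw})=b_{xy}(e_{xx},e_{xy})$ by part~(2), transporting the vanishing sum back to $(x,y)$.

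I expect this extremal case to be the sole genuine obstacle: the cancellation producing $B[\gamma,\delta]=0$ is automatic, but isolating the single scalar $B_{xy}$ requires a true commutator acting nontrivially on the $y$-row or the $x$-column, which fails precisely when $x$ and $y$ are the global extremes, the situation the three-element hypothesis is designed to exclude. An alternative, entirely derivation-theoretic route is to apply Lemma~\ref{d_xy(e_xx)+d_xy(e_yy)=0} in each argument to rewrite $b_{xy}(e_{xy},e_{xx})=-b_{xy}(e_{xy},e_{yy})$ and $b_{xy}(e_{xx},e_{xy})=-b_{xy}(e_{yy},e_{xy})$, and then use Lemmas~\ref{equal_1} and~\ref{equal_2} with some $z>y$ to show that both $b_{xy}(e_{xy},e_{xx})$ and $b_{xy}(e_{yy},e_{xy})$ equal $b_{yz}(e_{yz},e_{yy})$; this needs the same case split on the location of the third element.
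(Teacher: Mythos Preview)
Your argument is correct, but it follows a different route from the paper's own proof. The paper never invokes Lemma~\ref{change_seat} here; instead it fixes a third comparable element $z$ (guaranteed by the three-element hypothesis) and, in each of the three positions $z<x<y$, $x<z<y$, $x<y<z$, combines Lemma~\ref{d_xy(e_xx)+d_xy(e_yy)=0} with Lemmas~\ref{equal_1} and~\ref{equal_2} to identify $b_{xy}(e_{xy},e_{xx})$ and $-b_{xy}(e_{xx},e_{xy})$ with the same value $b_{yz}(e_{yz},e_{yy})$ (or its analogue). This is essentially the ``alternative, entirely derivation-theoretic route'' you sketch at the end.

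Your main argument is structurally different and arguably more conceptual: from Lemma~\ref{change_seat} you obtain the global annihilation identities $B[\gamma,\delta]=[\gamma,\delta]B=0$, and then a single well-chosen commutator $e_{yz}$ or $e_{wx}$ extracts $B_{xy}=0$ whenever $y$ is not maximal or $x$ is not minimal. Only in the genuinely extremal configuration do you need to fall back on Lemma~\ref{equal_2}. The paper's proof is more uniform across cases but leans entirely on the chain of derivation identities; yours isolates the extremal case as the only place the three-element hypothesis is truly needed and handles everything else by a direct commutator trick. Both are valid; yours makes the role of the hypothesis more transparent.
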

	\begin{proof}
		Let $x < y \in P$. There exists $z \neq x, y \in P$ that is comparable with $x$ and $y$ by the assumption that any maximal chain in $P$ has at least three elements. 
		We consider three cases.
		
		If $x < y < z$, 
		according to Lemma \ref{equal_1}, we have 
		\begin{equation}
			b_{xy}(e_{xy}, e_{yy}) = b_{yz}(e_{yy}, e_{yz}). \label{eq:b=-b_3}
		\end{equation}
		From Lemma \ref{equal_2}, we have
		\begin{equation}
			b_{xy}(e_{xx}, e_{xy}) = b_{yz}(e_{yy}, e_{yz}). \label{eq:b=-b_2}
		\end{equation}
		
		Considering \eqref{eq:b=-b_2} and \eqref{eq:b=-b_3}, we have
		\begin{equation}
			b_{xy}(e_{xy}, e_{xx}) = -b_{xy}(e_{xy}, e_{yy}) = -b_{yz}(e_{yy}, e_{yz}) = -b_{xy}(e_{xx}, e_{xy}).
		\end{equation}
		
		For the remaining two cases $x < z < y$ and $z < x < y$, a similar process yields the same result.
	\end{proof}

	\begin{theorem}\label{equal}
		For any $x < y$, $u < v$ in $P_j$, $j \in \mathcal{J}$ where $\mathcal{J}$ is the index set defined in \eqref{2_L_decomposition}, we have 
		\begin{equation}
			b_{xy}(e_{xy}, e_{xx}) = b_{uv}(e_{uv}, e_{uu}). \label{eq:equal}
		\end{equation} 
	\end{theorem}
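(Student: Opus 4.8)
The plan is to show that the scalar $b_{xy}(e_{xy}, e_{xx})$ depends only on the class $L_j$ and not on the particular comparable pair chosen inside $P_j$. Write $\phi(x,y) := b_{xy}(e_{xy}, e_{xx})$ for every pair $x < y$ in $P$. The first stage is to prove that $\phi$ is constant on every fixed maximal chain $m$. Given two comparable pairs $(x,y)$ and $(u,v)$ of $m$, set $a$ to be the least and $c$ the greatest of the four elements $x,y,u,v$; since $x<y$ we have $a<c$, and all four elements are comparable because $m$ is a chain. Applying Lemma \ref{equal_2}(1) to the subchain $x < y < c$ (when $y\neq c$; the case $y=c$ is trivial) gives $\phi(x,y)=\phi(x,c)$, and applying it to $a<x<c$ gives $\phi(x,c)=\phi(a,c)$; hence $\phi(x,y)=\phi(a,c)$, and symmetrically $\phi(u,v)=\phi(a,c)$. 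Thus $\phi(x,y)=\phi(u,v)$, so $\phi$ takes a single value on $m$, which I denote $c_m$.

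The second stage is to check that every comparable pair of $P_j$ lies in a maximal chain belonging to $L_j$. Let $x<y$ with $x,y\in P_j$, and extend $\{x,y\}$ to a maximal chain $m=\{x,y\}^+$. This $m$ lies in some class $L_k$ of the decomposition \eqref{2_L_decomposition}, so that $x,y\in P_k$ by the definition \eqref{2_decomposition}. Then $x<y$ with $x,y\in P_j\cap P_k$, and Lemma \ref{P_iP_j} forces $k=j$. Therefore $m\in L_j$ and, by the first stage, $\phi(x,y)=c_m$ for a chain $m\in L_j$.

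The third stage, which is the crux, is to prove $c_m=c_{m'}$ for any two maximal chains $m,m'\in L_j$. Because $L_j$ is an equivalence class under the connectedness relation on $L$, there is a finite sequence $m\approx l_0\approx l_1\approx\cdots\approx l_n\approx m'$ of maximal chains linked by $\approx$. For two adjacent chains, say $l_t\approx l_{t+1}$, the definition of $\approx$ supplies elements $a<b$ with $a,b\in l_t\cap l_{t+1}$; since $(a,b)$ is a comparable pair of each chain, the first stage gives $c_{l_t}=\phi(a,b)=c_{l_{t+1}}$. Transitivity along the whole sequence yields $c_m=c_{m'}$, so $\phi$ is constant across all of $L_j$. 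Combining with the second stage, for comparable pairs $x<y$, $u<v$ in $P_j$ we obtain $b_{xy}(e_{xy}, e_{xx})=\phi(x,y)=\phi(u,v)=b_{uv}(e_{uv}, e_{uu})$, as required. The main obstacle is precisely this gluing step: one must translate the abstract $\approx$-connectivity defining $L_j$ into a concrete chain of shared comparable pairs, to which the single-chain constancy can be applied link by link; the first two stages are direct consequences of Lemma \ref{equal_2} and Lemma \ref{P_iP_j}.
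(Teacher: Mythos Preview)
Your argument is correct and follows essentially the same route as the paper: establish constancy of $\phi$ along any single maximal chain via Lemma~\ref{equal_2}, then propagate this constant across $L_j$ by walking along an $\approx$-chain of maximal chains. Your Stage~2, which invokes Lemma~\ref{P_iP_j} to guarantee that a maximal chain through $x<y$ with $x,y\in P_j$ actually lies in $L_j$, makes explicit a step the paper's proof takes for granted; the only cosmetic gap is that in Stage~1 you should also note the trivial case $a=x$ alongside the case $y=c$.
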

	\begin{proof}
		Let $x < y$, $u < v \in P_j$, $j \in \mathcal{J}$. If $P_j$ is a totally ordered set, it is evident from Lemma \ref{equal_2} that $b_{xy}(e_{xy}, e_{xx}) = b_{uv}(e_{uv}, e_{uu})$. If $P_j$ is not a totally ordered set, according to the construction of $P_j$, there exist chains $l_0, l_1, \dots, l_n \in L_j$ defined in \eqref{2_L_decomposition} such that $x, y \in l_0$, $u, v \in l_n$, and $l_i \approx l_{i+1}$ for any $i \in \{0, 1, \dots, n-1\}$. Therefore, there exist $x_i < y_i \in l_i \cap l_{i+1}$. Because of $l_i$ is a totally ordered set, thus using Lemma \ref{equal_2}, we have
		\begin{equation}
			b_{xy}(e_{xy}, e_{xx}) = b_{x_0 y_0}(e_{x_0 y_0}, e_{x_0 x_0}) = \dots = b_{x_n y_n}(e_{x_n y_n}, e_{x_n x_n}) = b_{uv}(e_{uv}, e_{uu}).
		\end{equation}
		This proves \eqref{eq:equal}. 
	\end{proof}
	
	For any $j \in \mathcal{J}$, let $x_j < y_j \in P_j$, and define
	\begin{equation}
		\lambda_j = -b_{x_j y_j}(e_{x_j y_j}, e_{x_j x_j}) = b_{x_j y_j}(e_{x_j x_j}, e_{x_j y_j}). \label{lambda}
	\end{equation}
	A crucial conclusion is that for any pair $u < v$ in $P_j$, where $j \in \mathcal{J}$, the value $b_{uv}(e_{uv}, e_{uu}) = -\lambda_j$, as demonstrated by the aforementioned theorem.

	Now that the requisite preparations have been completed, we may proceed with the proof of the final theorem.

\begin{theorem}\label{final}
	Let $\mathcal{R}$ be a commutative ring with unity, and let $P$ be a locally finite poset such that any maximal chain in $P$ contains at least three elements. 
	The biderivation $b$ of the incidence algebra $I(P,\mathcal{R})$ is the sum of several inner biderivations and extremal biderivations.
\end{theorem}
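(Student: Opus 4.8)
The plan is to reduce to a single connected component, determine $b$ completely on pairs of matrix units, and then recognize the answer as a sum of the asserted inner and extremal biderivations.

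First I would use the first decomposition: by \eqref{eq:decomposition1} we have $b(\alpha,\beta)=\sum_{i\in\mathcal I}b(\alpha^i,\beta^i)$, and $b$ restricts to a biderivation of each $I(P^i,\mathcal R)$, so it suffices to treat a connected poset $P$ in which every maximal chain has at least three elements. Fixing the scalars $\lambda_j$ from \eqref{lambda} (well defined by Theorem \ref{equal}), I would prove the identity
\[
b(\alpha,\beta)=\sum_{j\in\mathcal J}\lambda_j[\alpha_j,\beta_j]+[\alpha,[\beta,T]],
\]
where $\alpha_j:=\sum_{x\le y\in P_j}\alpha_{xy}e_{xy}\in I(P_j,\mathcal R)$ is the \emph{full} restriction (diagonal included) and $T$ is a fixed element to be specified. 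Because $b$ is additive in each variable and, by local finiteness, all the resulting sums are entrywise finite, it is enough to verify this identity on every pair $(e_{xy},e_{uv})$ and then extend by additivity.

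Second I would classify the matrix-unit pairs. By Lemma \ref{cant compare} we may assume $x,y,u,v$ pairwise comparable, and by Theorem \ref{[e_xy,e_uv]=0} every such pair with $x\ne v$ and $y\ne u$ contributes $0$ unless it is one of the distinguished diagonal pairs $(e_{xx},e_{uu})$ with $x\ne u$, one minimal and the other maximal. Hence only two families survive: family (a), where $y=u$ or $x=v$ (so $[e_{xy},e_{uv}]\ne0$), and family (b), consisting of the minimal/maximal diagonal pairs together with the terms $b(e_{xx},e_{xx})$ of Theorem \ref{b(e_{xx},e_{xx})}. For family (a) I would run the sliding identities of Lemma \ref{d_structure_like_linear}, Lemma \ref{equal_2} and the constancy statement Theorem \ref{equal} to obtain the model values $b(e_{xy},e_{xx})=-\lambda_j e_{xy}$ and $b(e_{xy},e_{yv})=\lambda_j e_{xv}$ for $x,y,v\in P_j$; these are precisely $\lambda_j[e_{xy},e_{xx}]$ and $\lambda_j[e_{xy},e_{yv}]$. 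Since $[e_{xy},e_{uv}]\ne0$ forces $x,y,u,v$ to lie on a common chain inside a single $P_j$, and since a shared diagonal idempotent commutes with every diagonal element and brackets nontrivially only against the strict units of its own $P_j$, there is no double counting between distinct indices; summing over the matrix-unit expansions reproduces $\sum_j\lambda_j[\alpha_j,\beta_j]$ on the whole of family (a), and this inner part vanishes on family (b) because diagonal units commute.

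Third I would construct $T$ from family (b): set $T_{pq}:=b_{pq}(e_{pp},e_{pp})$ for $p$ minimal, $q$ maximal with $p<q$, and $T_{pq}:=0$ otherwise, so that $T\in I(P,\mathcal R)$ is supported on minimal-to-maximal slots. A direct computation then gives $[e_{pp},[e_{pp},T]]=\sum_{q\ \mathrm{max}}T_{pq}e_{pq}$ and $[e_{pp},[e_{qq},T]]=-T_{pq}e_{pq}$, matching Theorem \ref{b(e_{xx},e_{xx})} and, via the relation $b_{pq}(e_{pp},e_{pp})+b_{pq}(e_{pp},e_{qq})=0$ coming from Lemma \ref{d_xy(e_xx)+d_xy(e_yy)=0}, the value of $b(e_{pp},e_{qq})$; moreover $[e_{xy},[e_{uv},T]]=0$ as soon as either unit is off-diagonal, because any product of a strict unit with $T$ collapses. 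Thus $[\alpha,[\beta,T]]$ accounts for family (b) exactly and annihilates family (a), so adding the two contributions and then summing over $i\in\mathcal I$ yields the theorem. I expect the principal difficulty to be this last coherence check: verifying that one element $T$ simultaneously fits the pure-diagonal data $b(e_{xx},e_{xx})$ and the mixed data $b(e_{xx},e_{uu})$ along every maximal chain, which hinges on the sign relations furnished by Lemma \ref{d_xy(e_xx)+d_xy(e_yy)=0} and on the constancy of $\lambda_j$ across the chains comprising each $P_j$.
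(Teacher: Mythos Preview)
Your proposal is correct and follows essentially the same route as the paper: both arguments reduce to a connected component, use Lemma~\ref{cant compare} and Theorem~\ref{[e_xy,e_uv]=0} to kill all matrix-unit pairs except those with $[e_{xy},e_{uv}]\neq 0$ (your family (a), the paper's parts III--IV) and the special diagonal min/max pairs (your family (b), the paper's part I), then identify the first family with $\sum_j\lambda_j[\alpha_j,\beta_j]$ via Theorem~\ref{equal} and the second with an extremal biderivation built from the same $T$. Your formulation $[\alpha,[\beta,T]]$ in place of the paper's $[\hat\alpha,[\hat\beta,T]]$ is a harmless simplification, since $T$ is supported on minimal-to-maximal slots and hence annihilates strict units; likewise your $T_{pq}=b_{pq}(e_{pp},e_{pp})$ agrees with the paper's $-b_{pq}(e_{pp},e_{qq})$ by Lemma~\ref{d_xy(e_xx)+d_xy(e_yy)=0}.
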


\begin{proof}
	We proceed by first considering the case where $P$ is connected. The general case will follow by extending this result to each connected component of $P$.
	
	\textbf{Case 1: $P$ is connected.}
	
	By the decomposition \eqref{eq:2_alpha_decomposition}, we can write $P$ as a union of subsets $P_j$:
	\[
	P = \bigcup_{j \in \mathcal{J}} P_j.
	\]
	
	For any $\alpha, \beta \in I(P, \mathcal{R})$, we can decompose them as follows:
	\begin{align}
		\alpha &= \sum_{j \in \mathcal{J}} \alpha'_j + \alpha^D, \quad \text{where} \quad \alpha'_j = \sum_{x < y \in P_j} \alpha_{xy} e_{xy}, \quad \alpha^D = \sum_{z \in P} \alpha_{zz} e_{zz}, \\
		\beta &= \sum_{j' \in \mathcal{J}} \beta'_{j'} + \beta^D, \quad \text{where} \quad \beta'_{j'} = \sum_{u < v \in P_{j'}} \beta_{uv} e_{uv}, \quad \beta^D = \sum_{w \in P} \beta_{ww} e_{ww}.
	\end{align}
	
	Using these decompositions, we expand the biderivation $b(\alpha, \beta)$:
	\begin{align}\label{eq:final}
		b(\alpha, \beta) &= b(\alpha^D, \beta^D) + \sum_{j, j' \in \mathcal{J}} b(\alpha'_j, \beta'_{j'}) + \sum_{j \in \mathcal{J}} b(\alpha'_j, \beta^D) + \sum_{j' \in \mathcal{J}} b(\alpha^D, \beta'_{j'}).
	\end{align}
	
	\paragraph{I. Evaluating $b(\alpha^D, \beta^D)$:}
	
	Consider
	\[
	b(\alpha^D, \beta^D) = \sum_{z, w \in P} b(\alpha_{zz} e_{zz}, \beta_{ww} e_{ww}).
	\]
	
	From Lemma \ref{cant compare} and Theorem \ref{[e_xy,e_uv]=0}, we have
	\begin{equation}\label{eq:final_1}
		\begin{aligned}
			&b(\alpha^D, \beta^D) \\
			=& \sum_{z \leq w \in P} b(\alpha_{zz} e_{zz}, \beta_{ww} e_{ww}) \\
			=& \sum_{\substack{z < w \\ z \text{ min},\, w \text{ max}}}
			(b(\alpha_{zz} e_{zz}, \beta_{ww} e_{ww}) + b(\alpha_{ww} e_{ww}, \beta_{zz} e_{zz})) + \sum_{\substack{\text{$z$ is min}\\ \text{or max}}}b(\alpha_{zz} e_{zz}, \beta_{zz} e_{zz})
		\end{aligned}
	\end{equation}

	By Corollary \ref{corollary1} and Lemma \ref{d_xy(e_xx)+d_xy(e_yy)=0}, the part of each term in the \eqref{eq:final_1} can be expressed as
	\begin{equation}\label{term b(D,D) 1}
		\begin{aligned}
			&\sum_{\substack{z < w \\ z \text{ min},\, w \text{ max}}}
			(b(\alpha_{zz} e_{zz}, \beta_{ww} e_{ww}) + b(\alpha_{ww} e_{ww}, \beta_{zz} e_{zz}))\\
			=&\sum_{\substack{z < w \\ z \text{ min},\, w \text{ max}}}(\alpha_{zz} \beta_{ww} b_{zw}(e_{zz}, e_{ww}) + \alpha_{ww} \beta_{zz} b_{zw}(e_{ww}, e_{zz})) e_{zw}\\
			=&\sum_{\substack{z < w \\ z \text{ min},\, w \text{ max}}}(\alpha_{zz} \beta_{ww} + \alpha_{ww} \beta_{zz}) b_{zw}(e_{zz}, e_{ww}) e_{zw}
		\end{aligned}
	\end{equation}

	By Theorem \ref{b(e_{xx},e_{xx})} and \ref{d_xy(e_xx)+d_xy(e_yy)=0}, the other part of each term in the \eqref{eq:final_1} can be expressed as
	\begin{equation}\label{term b(D,D) 2}
		\begin{aligned}
			&\sum_{\substack{\text{$z$ is min}\\ \text{or max}}}b(\alpha_{zz} e_{zz}, \beta_{zz} e_{zz}) \\
			=& \sum_{\substack{z < w \\z \text{ min}\ w \text{ max}}}(\alpha_{zz}\beta_{zz}b_{zw}(e_{zz},e_{zz}) + \alpha_{ww}\beta_{ww}b_{zw}(e_{ww},w_{ww}))\\
			=&-\sum_{\substack{z < w \\z \text{ min}\ w \text{ max}}}(\alpha_{zz}\beta_{zz} + \alpha_{ww}\beta_{ww})b_{zw}(e_{zz},e_{ww})e_{zw}
		\end{aligned}
	\end{equation}

	Substituting \eqref{term b(D,D) 1} and \eqref{term b(D,D) 2} back, equation \eqref{eq:final_1} becomes
	\begin{equation}\label{eq:1_final}
		\begin{aligned}
			&b(\alpha^D, \beta^D) \\
			=&\sum_{\substack{z < w \in P \\ z \text{ min},\, w \text{ max}}} (\alpha_{zz} \beta_{ww} + \alpha_{ww} \beta_{zz} - \alpha_{zz}\beta_{zz} - \alpha_{ww}\beta_{ww})b_{zw}(e_{zz}, e_{ww})e_{zw}\\
			=&\sum_{\substack{z < w \in P \\ z \text{ min},\, w \text{ max}}}\left[\alpha_{zz}e_{zz} + \alpha_{ww}e_{ww}, [\beta_{zz}e_{zz} + \beta_{ww}e_{ww}, -b_{zw}(e_{zz},e_{ww})e_{zw}]\right]\\
			=&\sum_{\substack{z < w \in P \\ z \text{ min},\, w \text{ max}}}[\hat{\alpha},[\hat{\beta}, T]],
		\end{aligned}
	\end{equation}
	where $\hat{\alpha} = \sum\limits_{\substack{\text{$z$ is min}\\ \text{or max}}}\alpha_{zz}e_{zz}$,  
	$\hat{\beta} = \sum\limits_{\substack{\text{$z$ is min}\\ \text{or max}}}\beta_{zz}e_{zz}$ and 
	$T = -\sum\limits_{\substack{z < w \\z \text{ min},\, w \text{ max}}}b_{zw}(e_{zz},e_{ww})e_{zw}$

	\paragraph{II. Evaluating $\sum_{j \neq j' \in \mathcal{J}} b(\alpha'_j, \beta'_{j'})$:}
	
	We have
	\[
	\sum_{j \neq j' \in \mathcal{J}} b(\alpha'_j, \beta'_{j'}) = \sum_{j \neq j' \in \mathcal{J}} \sum_{x < y \in P_j} \sum_{u < v \in P_{j'}} b(\alpha_{xy} e_{xy}, \beta_{uv} e_{uv}).
	\]
	
	Consider elements $x < y \in P_j$ and $u < v \in P_{j'}$, where each pair in $x,y,u,v$ is comparable.
	A maximal chain, denoted by $l = \{x, y, u, v\}^+$, exists in $P$ that contains $x, y, u, v$. 
	There exist $l' \in L_i$ and $l'' \in L_j$, where $L_i, L_j$ are defined in \eqref{2_L_decomposition}, such that $x,y \in l'$ and $u, v\in l''$.
	It is obvious that $l' \approx l \approx l''$, then $l$ would belong to both $L_j$ and $L_{j'}$.
	Thus, we get $x,y,u,v \in l \in L_j\bigcap L_{j'}$, which contradicts Lemma \ref{P_iP_j}.
	
	Hence, existing a pair elements among $x, y, u, v$ are not comparable, and by Lemma \ref{cant compare}, it follows that
	\[
	b(\alpha_{xy} e_{xy}, \beta_{uv} e_{uv}) = 0.
	\]
	
	Therefore,
	\begin{equation}\label{eq:2_final}
		\sum_{j \neq j' \in \mathcal{J}} b(\alpha'_j, \beta'_{j'}) = 0.
	\end{equation}

	\paragraph{III. Evaluating $\sum_{j \in \mathcal{J}} b(\alpha'_j, \beta'_{j})$:}
	
	We have
	\[
	\sum_{j \in \mathcal{J}} b(\alpha'_j, \beta'_j) = \sum_{j \in \mathcal{J}} \sum_{x < y, u < v \in P_j} b(\alpha_{xy} e_{xy}, \beta_{uv} e_{uv}).
	\]
	
	From Lemmas \ref{cant compare} and \ref{[e_xy,e_uv]=0}, the part  $b(\alpha_{xy} e_{xy}, \beta_{uv} e_{uv})$ in above equation is non-zero only when any pair in $x,y,u,v$ is compared and $[e_{xy}, e_{uv}] \neq 0$, which implies either $x < y = u < v$ or $u < v = x < y$. Thus, the sum simplifies to
	\begin{equation}\label{eq:final_2}
	\begin{aligned}
		\sum_{j \in \mathcal{J}} b(\alpha'_j, \beta'_j) &= \sum_{j \in \mathcal{J}} \sum_{x < y < z \in P_j} \left( b(\alpha_{xy} e_{xy}, \beta_{yz} e_{yz}) + b(\alpha_{yz} e_{yz}, \beta_{xy} e_{xy}) \right) \\
		&= \sum_{j \in \mathcal{J}} \sum_{x < y < z \in P_j} \left( b_{xz}(\alpha_{xy} e_{xy}, \beta_{yz} e_{yz})  +  b_{xz}(\alpha_{yz} e_{yz}, \beta_{xy} e_{xy}) \right) e_{xz}.
	\end{aligned}
	\end{equation}
	
	Applying Lemmas \ref{d_structure_like_linear} and \ref{equal_1}, we obtain
	\begin{equation}
	\begin{aligned}
		&b_{xz}(\alpha_{xy}e_{xy},\beta_{yz}e_{yz})e_{xz}+b_{xz}(\alpha_{yz}e_{yz},\beta_{xy}e_{xy})e_{xz} \\
		=& (\alpha_{xy}\beta_{yz}b_{xy}(e_{xy},e_{yy})+\alpha_{yz}\beta_{xy}b_{yz}(e_{yz},e_{yy}))e_{xz}\\
		=&\lambda_j(\alpha_{xy}\beta_{yz}-\alpha_{yz}\beta_{xy})e_{xz},
	\end{aligned}
	\end{equation}
	where $\lambda_j$ is defined by \eqref{lambda}. Consequently, equation \eqref{eq:final_2} becomes
	\begin{equation}\label{eq:3_final}
		\sum_{j \in \mathcal{J}} b(\alpha'_j, \beta'_j) = \sum_{j \in \mathcal{J}} \lambda_j \sum_{x < y < z \in P_j} (\alpha_{xy} \beta_{yz} - \alpha_{yz} \beta_{xy}) e_{xz}.
	\end{equation}
	
	\paragraph{IV. Evaluating $\sum_{j \in \mathcal{J}} b(\alpha'_j, \beta^D) + \sum_{j \in \mathcal{J}} b(\alpha^D, \beta'_j)$:}
	
	First, consider $b(\alpha'_j, \beta^D)$:
	\[
	b(\alpha'_j, \beta^D) = \sum_{x < z \in P_j} b(\alpha_{xz} e_{xz}, \beta^D).
	\]
	
	From Lemma \ref{[e_xy,e_uv]=0}, $b(\alpha_{xz} e_{xz}, \beta_{ww} e_{ww}) = 0$ unless $w = x$ or $w = z$. Therefore,
	\begin{equation}
			\begin{aligned}
			b(\alpha'_j, \beta^D) &= \sum_{x < z \in P_j} \left( \alpha_{xz} \beta_{xx} b_{xz}(e_{xz}, e_{xx}) + \alpha_{xz} \beta_{zz} b_{xz}(e_{xz}, e_{zz}) \right) e_{xz} \\
			&= \sum_{x < z \in P_j} (\alpha_{xz} \beta_{xx} - \alpha_{xz} \beta_{zz}) b_{xz}(e_{xz}, e_{xx}) e_{xz} \\
			&= \lambda_j \sum_{x < z \in P_j} (\alpha_{xz} \beta_{zz} - \alpha_{xz} \beta_{xx}) e_{xz}.
		\end{aligned}
	\end{equation}

	Similarly, for $b(\alpha^D, \beta'_j)$, we obtain
	\[
	b(\alpha^D, \beta'_j) = \lambda_j \sum_{x < z \in P_j} (\alpha_{xx} \beta_{xz} - \alpha_{zz} \beta_{xz}) e_{xz}.
	\]
	
	Combining these results, we have
	\begin{equation}\label{eq:4_final}
		\sum_{j \in \mathcal{J}} b(\alpha'_j, \beta^D) + \sum_{j \in \mathcal{J}} b(\alpha^D, \beta'_j) = \sum_{j \in \mathcal{J}} \lambda_j \sum_{x < z \in P_j} (\alpha_{xz} \beta_{zz} - \alpha_{xz} \beta_{xx} + \alpha_{xx} \beta_{xz} - \alpha_{zz} \beta_{xz}) e_{xz}.
	\end{equation}
	
	\paragraph{Combining All Components:}
	Substituting equations \eqref{eq:1_final}, \eqref{eq:2_final}, \eqref{eq:3_final}, and \eqref{eq:4_final} into equation \eqref{eq:final}, we obtain
	\begin{align*}
		&b(\alpha, \beta) - b(\alpha^D, \beta^D)\\
		&= \sum_{j \in \mathcal{J}} \lambda_j \left( \sum_{x < y < z \in P_j} (\alpha_{xy} \beta_{yz} - \alpha_{yz} \beta_{xy}) + \sum_{x < z \in P_j} (\alpha_{xz} \beta_{zz} - \alpha_{xz} \beta_{xx} + \alpha_{xx} \beta_{xz} - \alpha_{zz} \beta_{xz}) \right) e_{xz} \\
		&= \sum_{j \in \mathcal{J}} \lambda_j \sum_{x \leq y \leq z \in P_j} (\alpha_{xy} \beta_{yz} - \alpha_{yz} \beta_{xy}) e_{xz} \\
		&= \sum_{j \in \mathcal{J}} \lambda_j [\alpha_j, \beta_j],
	\end{align*}
	where $\alpha_j=\sum_{x\leq y\in P_j}\alpha_{xy}e_{xy}, \beta_j=\sum_{x\leq y\in P_j}\beta_{xy}e_{xy}$.
	
	Consider the aforementioned points, we deduce the result:
	\begin{equation}
		b(\alpha, \beta) = \sum_{j \in \mathcal{J}} \lambda_j [\alpha_j, \beta_j] + [\hat{\alpha},[\hat{\beta}, T]].
	\end{equation}

	\textbf{Case 2: $P$ is not connected.}
	
	When $P$ is disconnected, we utilize the decomposition \eqref{1_decomposition}:
	\[
	P = \bigcup_{i \in \mathcal{I}} P^i,
	\]
	where each $P^i$ is a connected component of $P$. Further, each connected component $P^i$ can be decomposed as
	\[
	P^i = \bigcup_{j \in \mathcal{J}_i} P^i_j
	\]
	using decomposition \eqref{2_decomposition}.
	
	For any $\alpha, \beta \in I(P, \mathcal{R})$, write
	\[
	\alpha = \sum_{i \in \mathcal{I}} \alpha^i \quad \text{and} \quad \beta = \sum_{i \in \mathcal{I}} \beta^i,
	\]
	where $\alpha^i, \beta^i \in I(P^i, \mathcal{R})$, as per decomposition \eqref{1_alpha_decomposition}. 
	
	By equation \eqref{eq:decomposition1}, the biderivation $b$ satisfies
	\[
	b(\alpha, \beta) = \sum_{i \in \mathcal{I}} b(\alpha^i, \beta^i).
	\]
	
	Since each $P^i$ is connected, applying the result from Case 1, we obtain
	\[
	b(\alpha^i, \beta^i) = \sum_{j \in \mathcal{J}_i} \lambda^i_j [\alpha^i_j, \beta^i_j]  + [\hat{\alpha}^i,[\hat{\beta}^i, T^i]].
	\]
	
	Therefore, combining all components, we conclude that
	\begin{equation}
		b(\alpha, \beta) = \sum_{i \in \mathcal{I}} \left( \sum_{j \in \mathcal{J}_i} \lambda^i_j [\alpha^i_j, \beta^i_j] + [\hat{\alpha}^i,[\hat{\beta}^i, T^i]] \right),
	\end{equation}
	as desired. It is evident that $b$ is the sum of several inner biderivations and extremal biderivations.
\end{proof}

It is evident that if there are not $x < y \in P$ that $x$ is the minimal element and $y$ is the maximal element, $\hat{\alpha}^i = \hat{\beta}^i = T^i = 0$. Consequently, the next corollary holds.

\begin{corollary}
	Let $P$ be a poset that has at least three elements, and let $\mathcal{R}$ be a commutative ring with unity. In the incidence algebra of $P$ over $\mathcal{R}$, if the number of elements in any maximal chain in $P$ is infinite, every additive biderivation is the sum of several inner biderivations.
\end{corollary}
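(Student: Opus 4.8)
The plan is to derive the corollary directly from Theorem \ref{final}, whose hypothesis is satisfied here since a maximal chain with infinitely many elements in particular contains at least three. By that theorem, any additive biderivation $b$ of $I(P,\mathcal{R})$ can be written as
\[
b(\alpha, \beta) = \sum_{i \in \mathcal{I}} \left( \sum_{j \in \mathcal{J}_i} \lambda^i_j [\alpha^i_j, \beta^i_j] + [\hat{\alpha}^i, [\hat{\beta}^i, T^i]] \right),
\]
so it suffices to show that each extremal term $[\hat{\alpha}^i, [\hat{\beta}^i, T^i]]$ vanishes. Recalling from the proof of Theorem \ref{final} that $T^i = -\sum_{z < w,\ z \text{ min},\ w \text{ max}} b_{zw}(e_{zz}, e_{ww}) e_{zw}$ is indexed by the pairs $z < w$ in $P^i$ with $z$ minimal and $w$ maximal, the whole extremal term is zero as soon as no such pair exists in $P$.

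Hence the crux is the claim that, under the hypothesis that every maximal chain of $P$ is infinite, there is no pair $z < w$ in $P$ with $z$ minimal and $w$ maximal. First I would extend the two-element chain $\{z, w\}$ to a maximal chain $l$ of $P$. Since $z$ is a minimal element of $P$, no element lies strictly below $z$, so $z$ is the least element of $l$; dually $w$ is the greatest element of $l$. Consequently every element of $l$ lies in the interval $\{u \in P : z \leq u \leq w\}$, which is finite because $P$ is locally finite. Thus $l$ is a finite maximal chain, contradicting the hypothesis. This proves the claim.

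With the claim in hand, for each connected component $P^i$ the index set defining $T^i$ is empty, so $T^i = 0$ and therefore $[\hat{\alpha}^i, [\hat{\beta}^i, T^i]] = 0$ for all $i \in \mathcal{I}$. The decomposition of Theorem \ref{final} then collapses to
\[
b(\alpha, \beta) = \sum_{i \in \mathcal{I}} \sum_{j \in \mathcal{J}_i} \lambda^i_j [\alpha^i_j, \beta^i_j],
\]
which exhibits $b$ as a sum of inner biderivations, as required. I expect no genuine difficulty beyond the claim itself; the only point demanding care is the interaction between local finiteness and the infinite-chain hypothesis, namely that a comparable minimal--maximal pair would be squeezed into a finite interval and thereby force a finite maximal chain. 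Everything else is a direct substitution into the already-established structure theorem.
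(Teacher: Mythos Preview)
Your proposal is correct and follows the same route as the paper: the paper simply observes that when no pair $x<y$ with $x$ minimal and $y$ maximal exists, the elements $\hat{\alpha}^i$, $\hat{\beta}^i$, $T^i$ all vanish, and states the corollary as an immediate consequence. Your write-up is in fact more complete, since you spell out the step the paper leaves as ``evident'': a comparable minimal--maximal pair $z<w$ would trap any maximal chain through it inside the finite interval $[z,w]$ (by local finiteness), contradicting the infinite-chain hypothesis.
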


\bigskip
\noindent{\bf Acknowledgements
} C. Zhang would like to thank the support of NSFC grant No. 12101111.

\bibliographystyle{elsarticle-num}

\end{document}